\theoremstyle{plain}
\newtheorem{thm}{Theorem}[section]
\newtheorem{lem}[thm]{Lemma}
\newtheorem{prop}[thm]{Proposition}
\newtheorem{cor}[thm]{Corollary}
\theoremstyle{definition}
\newtheorem{defi}[thm]{Definition}
\newtheorem*{rem}{Remark}
\numberwithin{thm}{section}
\title{Most Interval Exchanges Have No Roots}
\author{Daniel Bernazzani}
\begin{document}

\begin{abstract}
Let $T$ be an $m$-interval exchange transformation. By the rank of $T$ we mean the dimension of the $\mathbb{Q}$-vector space spanned by the lengths of the exchanged intervals. We prove that if $T$ is minimal and the rank of $T$ is greater than $1+\lfloor m/2 \rfloor$, then $T$ cannot be written as a power of another interval exchange. We also demonstrate that this estimate on the rank cannot be improved.

In the case that $T$ is a minimal 3-interval exchange transformation, we prove a stronger result: $T$ cannot be written as a power of another interval exchange if and only if $T$ satisfies Keane's infinite distinct orbit condition. In the course of proving this result, we give a classification (up to conjugacy) of those minimal interval exchange transformations whose discontinuities all belong to a single orbit.
\end{abstract}

\maketitle
\thispagestyle{empty}

\section{Introduction}

An interval exchange transformation (IET) is a bijective map $T:[0,1)\rightarrow [0,1)$ defined by partitioning the unit interval $[0,1)$ into finitely many subintervals and then rearranging these subintervals by translations.

The formal definition of an IET is given below. The permutation group of the set $\lbrace 1,2,\dots,m \rbrace$ will be denoted by $S_m$. 

\begin{defi}\label{defi:iet}
Fix $m\in \mathbb{N}$. Let $\pi \in S_m$ and let $\lambda=(\lambda_1,\lambda_2,\dots,\lambda_m)$ be a vector in the simplex $$\Delta_{m}=\bigg\lbrace (\lambda_1,\lambda_2,\dots,\lambda_{m})\in \mathbb{R}^m \hspace{1mm} : \hspace{1mm} \lambda_i >0, \hspace{1mm}\sum_i \lambda_i = 1 \bigg\rbrace.$$ Let $$\beta_0=0 \text{ and } \beta_j=\sum_{i=1}^j\lambda_j \text{ for } 1\leq j \leq m.$$ The set $\lbrace \beta_0,\beta_1,\dots,\beta_m \rbrace$ partitions $[0,1)$ into $m$ subintervals of the form $I_j=[\beta_{j-1},\beta_j)$. We can now define a map $T_{(\pi,\lambda)}:[0,1)\rightarrow [0,1)$ by 
$$T_{(\pi,\lambda)}(x)=x - \Bigg(\sum_{i<j}\lambda_i\Bigg) +\Bigg(\sum_{\pi(i)<\pi(j)}\lambda_i\Bigg), \text{ for } x\in I_j.$$ The map $T_{(\pi,\lambda)}$ rearranges the intervals $I_j$ by translations according to the permutation $\pi$. We will refer to a map constructed in this manner as an $m$-IET. For convenience, we sometimes drop the reference to $\pi$ and $\lambda$ and simply denote an IET by a single letter, typically $T$ or $S$. 
\end{defi}  

The dynamical properties of a single IET have been studied extensively. Some early papers in the field are those of Keane \cite{Keane1,Keane2}, Rauzy \cite{Rauzy}, and Veech \cite{Veech_IET}. In 1977, Keane famously conjectured that a typical IET is uniquely ergodic \cite{Keane2}. Keane's conjecture was proven in 1982 by Veech \cite{Veech_Gauss} and Masur \cite{Masur}, who worked independently of one another. Later, Boshernitzan \cite{Bosh_Unique} gave a different proof of Keane's conjecture by showing that most IETs satisfy an explicit Diophantine condition which implies unique ergodicity. More recently, Avila and Forni \cite{Avila_Forni} proved that IETs are typically weakly mixing and Chaika \cite{Chaika_Disjoint} proved that every ergodic transformation is disjoint from almost every IET. For a good introduction to the ergodic theory of IETs, see Viana's survey \cite{Viana}. 

The focus of this paper is different from those mentioned above. The set of all IETs forms a group $\mathbb{G}$ under composition. Several authors, including Novak \cite{Novak1,Novak2,Novak3}, Vorobets \cite{Vorobets}, and Boshernitzan \cite{Bosh_Subgroups}, have investigated the structure of this group. Despite the recent interest, there is still much that is not known about $\mathbb{G}$. For example, it is unknown whether or not a subgroup of $\mathbb{G}$ could be isomorphic to a non-abelian free group. See \cite{Free_Subgroups}, \cite{Juschenko}, and \cite{Novak3} for some results related to this open question. It is also unknown whether or not $\mathbb{G}$ contains any subgroups of intermediate growth. 

\begin{defi}\label{defi:root}
We will denote $n$-fold compositions $T\circ \cdots \circ T$ by $T^n$. We will say that $T$ has an $n^{th}$ \textit{root} in $\mathbb{G}$ if there exists $S\in \mathbb{G}$ such that $T=S^n$. 
\end{defi}

In this paper we will show that a large class of IETs  do not have any nontrival roots in $\mathbb{G}$. Recall that an IET $T$ is said to be minimal if for each $x\in [0,1)$, the orbit $\mathcal{O}_T(x)=\lbrace T^n(x) : n\in \mathbb{Z} \rbrace$ is dense in $[0,1)$. Recall also that an IET $T$ is said to be of rotation type if there exists $\alpha\in \mathbb{R}$ such that  $T(x)=x+\alpha \hspace{2pt}(\text{mod }1)$ for all $x\in [0,1)$.

\begin{thm}\label{thm:roots general}
Let $T$ be a minimal IET which is not of rotation type. Suppose that the lengths of the exchanged subintervals are linearly independent over $\mathbb{Q}$. Then $T$ does not have an $n^{th}$ root in $\mathbb{G}$ for any $n\geq 2$.
\end{thm}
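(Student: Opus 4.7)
My plan is to argue by contradiction: suppose $T=S^n$ for some $S\in\mathbb{G}$ and integer $n\ge 2$. After replacing $T$ and $S$ by their canonical (irreducible) representations — merging any partition points at which the translation amount is unchanged — I may assume that every partition point is a genuine discontinuity of the underlying map. This reduction preserves the $\mathbb{Q}$-linear independence of the lengths of $T$ (the new lengths are sums of disjoint contiguous blocks of the old ones, so their coefficient vectors in $\mathbb{Z}^m$ are linearly independent), and it leaves $T$ with $m\ge 3$ intervals because $T$ is not of rotation type. Relabeling, I write $\lambda_1,\ldots,\lambda_m$ and $\mu_1,\ldots,\mu_k$ for the lengths of the canonical forms of $T$ and $S$.

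The first step is a lattice inclusion. Every translation amount of $S$ is a $\mathbb{Z}$-linear combination of $\mu_1,\ldots,\mu_k$, and $1=\sum_j\mu_j$ belongs to $M:=\mathbb{Z}\mu_1+\cdots+\mathbb{Z}\mu_k$, so $S$ preserves $M$ modulo $\mathbb{Z}$. Every discontinuity of $T=S^n$ lies in $\bigcup_{j=0}^{n-1}S^{-j}(D_S)\subseteq M\cap[0,1)$, so every canonical length $\lambda_i$ of $T$ lies in $M$. The $\mathbb{Q}$-linear independence of the $\lambda_i$'s then forces $\operatorname{rank}_\mathbb{Z}(M)\ge m$, hence $k\ge m$.

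The second step counts discontinuities. The set of discontinuities of $T=S^n$ has cardinality at most $n(k-1)$, so $m-1\le n(k-1)$. If equality holds — that is, no ``cancellations'' occur among the candidates $\{S^{-j}(\beta^S_\ell):0\le j\le n-1,\,1\le\ell\le k-1\}$ — then combined with $k\ge m$ and $n\ge 2$ I obtain $m-1=n(k-1)\ge n(m-1)\ge 2(m-1)$, forcing $m\le 1$ and contradicting $m\ge 3$.

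\textbf{The main obstacle} is thus to rule out cancellations. A cancellation arises either from a \emph{coincidence} $S^{-j_1}(\beta^S_{\ell_1})=S^{-j_2}(\beta^S_{\ell_2})$ with $(j_1,\ell_1)\ne(j_2,\ell_2)$ — which, since minimality of $T$ forces minimality of $S$, requires two distinct $S$-discontinuities to lie in a common $S$-orbit — or from a \emph{jump cancellation} in which the jumps of $S$ along the orbit $x,Sx,\ldots,S^{n-1}x$ sum to zero modulo $\mathbb{Z}$. The archetype of both phenomena is the rotation identity $R_\alpha=R_{\alpha/n}^n$, where all $n$ candidates collapse to a single point, and this is essentially why the hypothesis ``$T$ not of rotation type'' must enter. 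The key lemma will be that every such cancellation imposes a nontrivial $\mathbb{Z}$-linear relation on the $\mu_j$'s which, via the lattice inclusion of the first step, descends to a $\mathbb{Q}$-linear relation on the $\lambda_i$'s, contradicting the hypothesis. Ensuring that multiple simultaneous cancellations yield enough genuinely independent relations — particularly for larger $n$ and $k$ — is likely to be the most delicate part.
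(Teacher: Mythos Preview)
Your first step is correct and matches the paper exactly: the inclusion $D(T)\subseteq\bigcup_{j=0}^{n-1}S^{-j}(D(S))$ gives $\operatorname{rank}(T)\le\operatorname{rank}(S)$, hence $k\ge\operatorname{rank}(S)\ge m$. The problem is entirely in your ``main obstacle,'' and it is a real gap, not a technicality.

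Your proposed key lemma is that each cancellation yields a nontrivial $\mathbb{Z}$-relation on the $\mu_j$'s which \emph{descends to a relation on the $\lambda_i$'s}. The first half is plausible: a coincidence $S^p(\beta^S_{\ell})=\beta^S_{\ell'}$ does impose an integer linear equation on $(\mu_1,\dots,\mu_k)$. But the descent claim is unjustified. You only know $\lambda_i\in M=\mathbb{Z}\mu_1+\cdots+\mathbb{Z}\mu_k$ and $\dim_{\mathbb{Q}}(\mathbb{Q} M)=\operatorname{rank}(S)\ge m$. If $k>\operatorname{rank}(S)$, there are already $k-\operatorname{rank}(S)$ relations among the $\mu_j$'s, and a cancellation relation may simply be one of \emph{those} --- it restricts the $\mu_j$'s within $\mathbb{R}^k$ but says nothing about the $m$-dimensional subspace where the $\lambda_i$'s live. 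Concretely: with $k=m+1$ and $\operatorname{rank}(S)=m$, a single coincidence can be fully accounted for by the one existing relation on the $\mu_j$'s, producing no constraint on the $\lambda_i$'s at all. Your counting requires $n(k-1)-(m-1)$ cancellations, and you would need these to give at least $k-m+1$ relations \emph{independent of} the $k-\operatorname{rank}(S)$ ambient ones. Nothing in the setup forces this, and your sketch offers no mechanism for it. (Your argument does go through cleanly when $\operatorname{rank}(S)=k$, since then Keane's IDOC precludes coincidences; the difficulty is precisely the regime $m\le\operatorname{rank}(S)<k$.)

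The paper sidesteps this entirely. Rather than trying to control cancellations, it bounds $\operatorname{rank}(S)$ from \emph{above} by a dynamical argument: a chain-counting lemma shows that the discontinuities of $S$ lie in at most $q\le\lfloor m/2\rfloor$ distinct $S$-orbits, and then the first return map of $S$ to a suitable subinterval is a $(q+1)$-IET whose Rokhlin tower fills $[0,1)$, forcing $\operatorname{rank}(S)\le q+1\le 1+\lfloor m/2\rfloor$. Combined with $\operatorname{rank}(S)\ge m$ this gives $m\le 2$, a contradiction. The missing idea in your approach is this structural bound on $\operatorname{rank}(S)$ via the orbit structure of $D(S)$; no amount of lattice bookkeeping on the cancellations seems to recover it.
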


\noindent Theorem \ref{thm:roots general} follows from a more general result which will be stated in the next section of this paper (see Theorem \ref{thm:roots rank}). 

If we restrict our attention to 3-IETs, we can prove a stronger result. We recall the following definition. 
 
\begin{defi}\label{defi:idoc}
Let $T$ be an $m$-IET. Let $\beta_1,\beta_2,\dots,\beta_{m-1}$ be as in Definition \ref{defi:iet}. We say that $T$ satisfies the \textit{infinite distinct orbit condition} (IDOC) if each of the orbits $\mathcal O_T(\beta_1),\mathcal O_T(\beta_2),\dots,\mathcal O_T(\beta_{m-1})$ is infinite and $\mathcal O_T(\beta_i) \cap \mathcal O_T(\beta_j)=\emptyset$ for $i\neq j$.
\end{defi}

\noindent The IDOC was originally formulated by Keane, who showed that any IET which satisfies it and exchanges two or more intervals must be minimal \cite{Keane1}. 

\begin{thm}\label{thm:roots idoc}
Let $T$ be a minimal 3-IET which is not of rotation type. Then $T$ has an $n^{th}$ root in $\mathbb{G}$ for some $n\geq 2$ if and only if $T$ fails to satisfy the infinite distinct orbit condition.
\end{thm}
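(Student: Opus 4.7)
The proof splits naturally according to the biconditional.

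\emph{Direction 1 (IDOC fails $\Rightarrow$ root exists).} A minimal 3-IET that is not of rotation type must have permutation $\pi=(3,2,1)$, and for such a $T$, failure of IDOC together with minimality forces $\beta_2 = T^k(\beta_1)$ for some nonzero integer $k$, placing both discontinuities in a single $T$-orbit. I would then invoke the classification, announced in the abstract and to be proved later in the paper, of minimal IETs whose discontinuities all lie in one orbit. Applied to $T$, this classification yields a normal form exhibiting an explicit cyclic symmetry of some order $n\ge 2$, essentially realizing $T$ as a skew product of a rotation with a cyclic permutation on $n$ fibers; from the normal form one reads off an IET $S$, a ``finer phase shift'' inside the skew product, with $T=S^n$.

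\emph{Direction 2 (IDOC holds $\Rightarrow$ no roots).} When $\mathrm{rank}(T)=3$, Theorem \ref{thm:roots general} gives the conclusion at once. Minimality and the non-rotation-type hypothesis rule out rank $1$ (rational lengths with permutation $(3,2,1)$ give periodic, non-minimal $T$), so the only remaining possibility is rank $2$. Here I would assume $T=S^n$ with $S\in\mathbb G$ and $n\ge 2$, note that $S$ is automatically minimal (otherwise $T=S^n$ would fail to be), and then analyze how the discontinuities of $S^n$ collapse to just the two discontinuities $\beta_1,\beta_2$ of $T$. Each such discontinuity is of the form $S^{-i}(\gamma)$ for some $0\le i<n$ and some discontinuity $\gamma$ of $S$, unless $S^n$ happens to be continuous at that point. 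Using the rank-$2$ hypothesis to limit the algebraic coincidences that make such cancellations possible, I would show that $\beta_1$ and $\beta_2$ must descend from a single $S$-orbit and, in fact, that the exponent difference is divisible by $n$; this puts $\beta_1,\beta_2$ in a common $T$-orbit and contradicts IDOC.

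\emph{Main obstacle.} The rank-$2$ case of Direction 2 is where the real work lies: Theorem \ref{thm:roots general} does not apply, and many candidate root constructions become algebraically plausible when the three lengths span only a $2$-dimensional $\mathbb Q$-space. I expect that the same classification of single-orbit-discontinuity IETs used in Direction 1 will be the key tool here as well, since it provides both the list of $T$'s that could conceivably be powers and a concrete recipe for recognizing the root $S$ from $T$; IDOC then appears as precisely the obstruction to having such a classification apply, and the converse implication follows by contrapositive.
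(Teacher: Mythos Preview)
Your Direction~1 is essentially the paper's argument: failure of IDOC puts both discontinuities in one $T$-orbit, Theorem~\ref{thm:one orbit} makes $T$ conjugate to a tower over a minimal rotation, and Theorem~\ref{thm:tower classification} supplies a root.

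Direction~2 is where your plan diverges and becomes incomplete. The paper does not split by rank. Instead, from $T=S^n$ with $n\ge 2$, an orbit-counting lemma (Lemma~\ref{lem:orbit counting}) shows that $D(S)$ lies in a single $S$-orbit (since $|D(T)|=2$ forces at most $\lfloor 3/2\rfloor=1$ maximal $S$-chain). The single-orbit classification is then applied to $S$, not to $T$, and Theorem~\ref{thm:tower classification} yields two cases: either $S$ is conjugate to a minimal rotation, or $S$ is conjugate to a tower of constant height $d>1$. These are finished by entirely different mechanisms. In the rotation case, $T$ is itself conjugate to a rotation and hence has uncountable centralizer; Novak's result that a minimal, non-rotation-type IET satisfying IDOC has linear discontinuity growth and therefore countable centralizer then forces IDOC to fail. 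In the constant-height case, $S^d$ and hence $T^d$ is not minimal, which again kills IDOC for $T$.

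Your proposed route---showing directly that the $S$-exponent separating $\beta_1$ and $\beta_2$ is divisible by $n$ via ``rank-$2$ algebraic coincidences''---is not substantiated, and I do not see how the rank hypothesis by itself yields that divisibility. The case where $S$ is conjugate to a rotation is exactly where such a direct argument is hardest: you would be proving by hand that a $3$-IET conjugate in $\mathbb{G}$ to an irrational rotation cannot satisfy IDOC, and the paper sidesteps this by importing Novak's centralizer theorem, the one genuinely external ingredient your plan is missing. Your instinct that the single-orbit classification is the key tool is right, but it must be applied to the root $S$, and the rotation branch needs an argument you have not supplied.
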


If $T$ is a 3-IET which is not of rotation type and $\lambda_1,\lambda_2,\lambda_3$ are the lengths of the exchanged subintervals, then it is straightforward to check that the first return map to the interval $[0,1-\lambda_3)$ is given by $x\mapsto x+(\lambda_1 - \lambda_3) \hspace{3pt}(\text{mod }1-\lambda_3)$.  It follows that $T$ is minimal if and only if $\dfrac{\lambda_1 - \lambda_3}{1-\lambda_3} \notin \mathbb{Q}.$ Assuming that $T$ is minimal, $T$ satisfies the IDOC if and only if the orbits of $0$ and $\lambda_1$ under the first return map are distinct. This is amounts to the statement that for every pair of integers $n,m$, we have $n(\lambda_1 -\lambda_3) \neq \lambda_1 + m(1-\lambda_3)$. Thus, it is possible to decide whether or not $T$ has any roots in $\mathbb{G}$, provided one understands the rational dependencies among $\lambda_1,\lambda_2,\lambda_3$.

Our work is related to a result of Novak. Given $T\in \mathbb{G}$, let $C(T)$ denote the centralizer of $T$ in $\mathbb{G}$ and let $\langle T \rangle$ denote the cyclic subgroup generated by $T$. Among other results, Novak shows that if $T$ is minimal and exhibits ``linear discontinuity growth", then the quotient $C(T)/\langle T \rangle$ is finite \cite[Proposition~5.3]{Novak1}. 

Novak's result on centralizers is related to our investigation of the existence of roots in $\mathbb{G}$. Specifically, if $T$ has infinite order in $\mathbb{G}$, but $C(T)/\langle T \rangle$ is finite, then $T$ cannot have $n^{th}$ roots in $\mathbb{G}$ for all sufficiently large $n$. Nevertheless, the assumption that $T$ is minimal and has linear discontinuity growth is not enough to guarantee that $T$ has no roots in $\mathbb{G}$. Indeed, an examination of the proof of Proposition 2.3 of \cite{Novak1} makes it clear that any IET which satisfies the IDOC and which is not of rotation type will exhibit linear discontinuity growth. Let $S$ be a $3$-IET with permutation $(321)$ which satisfies the IDOC. Let $T=S^n$ for some $n\geq 2$. Then $T$ satisfies the IDOC, so $T$ is minimal and has linear discontinuity growth. However, $T$ has an $n^{th}$ root by construction.

Our proof of Theorem \ref{thm:roots idoc} is based on two other results, which are interesting in their own right. In order to describe them, we introduce the concept of a tower over an IET.

Let $T$ be an $m$-IET. Let $I_1,I_2,\dots,I_m$ be the intervals which are exchanged by $T$ and suppose that $f:[0,1)\rightarrow \mathbb{N}$ is constant on each of these intervals, say $f(x)=n_j$ for $x\in I_j$. We can define a new map $T_f$ as follows. The domain will consist of those points of the form $(x,i)$, where $x\in [0,1)$ and $1\leq i\leq f(x)\in \mathbb{N}$. The map $T_f$ is defined by 
$$T_f(x,i)= \begin{cases} (x,i+1) & \text{if }i+1 \leq f(x) \\ (T(x),1) & \text{otherwise} \end{cases}$$ The domain may be visualized as tower over $[0,1)$. The map $T_f$ transports a point up to the next level of the tower, unless the point is already at the top, in which case it is transported back to the first level according to the original map $T$.

\begin{defi}\label{defi:tower}
Let $T$ and $f$ be as in the preceding paragraph. We can view the map $T_f$ as an IET by laying the levels of the tower end to end, and then rescaling so that the total length of the resulting interval is one. We will refer to an IET constructed in this manner as a \textit{tower of type $(n_1,n_2,\dots,n_m)$} over $T$. If $n_1=n_2=\cdots = n_m$, we will refer to $T_f$ as a \textit{tower of constant height $n_1=n_2=\cdots = n_m$}. 
\end{defi}

The above definition is somewhat arbitrary, since there are many ways to arrange the intervals exchanged by a tower inside $[0,1)$, and therefore many ways to view a tower as an IET. Accordingly, we could have used the term ``tower" to refer to a conjugacy class of IETs rather than a single IET. We opted not to do so since it is convenient to have a specific model in mind when referring to a tower. However, the reader should feel free to use either interpretation. To be clear, throughout this paper the word ``conjugate" refers to conjugation in the group $\mathbb{G}$.  

We will refer to a 2-IET with permutation (21) as a rotation. Figure 1 depicts a tower of type $(m,n)$ over a rotation. 

\begin{figure}
\centering
\vspace{10pt}
\begin{tikzpicture}
\draw[|-|, thick] (2,0) -- (6,0) node at (1,0){$m$};

\draw[|-|, thick] (2,-.5) -- (6,-.5) node at (1,-.5){$m-1$};

\draw node at (1,-1.25){$\vdots$};
\draw node at (4,-1.25){$\uparrow$};

\draw[|-|, thick] (2,-2) -- (6,-2) node at (1,-2){$n+1$};

\draw[|-|, thick] (2,-2.5) -- (6,-2.5)node at (1,-2.5){$n$};
\draw[-|, thick] (6,-2.5) -- (8,-2.5);

\draw[|-|, thick] (2,-3) -- (6,-3) node at (1,-3){$n-1$};
\draw[-|, thick] (6,-3) -- (8,-3);

\draw node at (1,-3.75){$\vdots$};
\draw node at (4,-3.75){$\uparrow$};
\draw node at (7,-3.75){$\uparrow$};

\draw[|-|, thick] (2,-4.5) -- (6,-4.5) node at (1,-4.5){$2$};
\draw[-|, thick] (6,-4.5) -- (8,-4.5);

\draw[|-|, thick] (2,-5) -- (6,-5) node at (1,-5){$1$};
\draw[-|, thick] (6,-5) -- (8,-5);

\draw node at (4,-5.5){$I_1$};
\draw node at (7,-5.5){$I_2$};

\end{tikzpicture}
\caption{A tower of type $(m,n)$ over a 2-IET}
\end{figure}
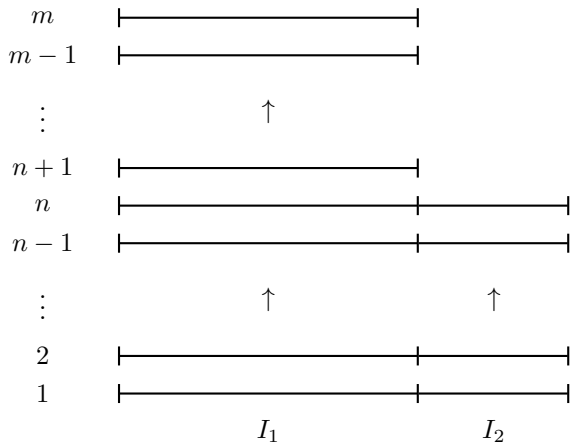   

\begin{thm}\label{thm:one orbit}
Let $T$ be a minimal IET. Suppose that the discontinuities of $T$ all belong to a single orbit. Then $T$ is conjugate to a tower over a minimal rotation.
\end{thm}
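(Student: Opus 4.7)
The plan is to use induction on the number $m$ of intervals exchanged by $T$ in its minimal representation. The base case $m=2$ is immediate, as any minimal $2$-IET is a rotation, which is trivially a tower of constant height one over itself. For the inductive step, I would induce $T$ on a well-chosen subinterval $J$ to produce a new minimal IET $S=T_J$ on strictly fewer intervals, whose discontinuities all lie in a single $S$-orbit, and such that $T$ is conjugate to a tower over $S$. Applying the inductive hypothesis to $S$ and combining it with the observation that a tower over a tower is itself a tower (new column heights are obtained by summing heights along sub-columns) then yields $T$ as a tower over a rotation.

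To choose $J$, I use the following pigeonhole argument. Since $T$ exchanges $m$ intervals while $T^{-1}$ has only $m-1$ interior discontinuities, there exists at least one interval $I_{j^*}$ whose interior contains no discontinuity of $T^{-1}$; equivalently, $T^{-1}$ restricted to $I_{j^*}$ is a single translation. Such an $I_{j^*}$ behaves as a top-of-column cell in the sought tower structure, with $T^{-1}(I_{j^*})$ playing the role of the single cell directly beneath it. I would then set $J = [0,1) \setminus I_{j^*}$, a union of at most two subintervals that can be rescaled to $[0,1)$ via an IET conjugation, and consider $S = T_J$.

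The tower structure of $T$ over $S$ comes from the first-return time $r_J$: generically, $r_J$ equals $1$ on $J \setminus T^{-1}(I_{j^*})$ and equals $2$ on $T^{-1}(I_{j^*})$, giving $T$ as a tower over $S$ with one column of height $2$ and all others of height $1$. A careful accounting of the discontinuities of $S$---accounting for the $m-1$ discontinuities of $T$, the pair $\beta_{j^*-1}, \beta_{j^*}$ at the boundary of $I_{j^*}$, a new discontinuity at the glue point where the two components of $J$ meet, and the boundary of the level set where $r_J$ equals $2$---yields $m-2$ discontinuities of $S$, so $S$ is an $(m-1)$-IET.

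The main obstacle is verifying that these $m-2$ discontinuities of $S$ still lie in a single $S$-orbit; this is where the single-orbit hypothesis on $T$ is essential. The orbit ordering of the $\beta_i$'s under $T$ must descend to a compatible orbit ordering under $S$, with $T_J$'s ``skip-over'' of $I_{j^*}$ collapsing two consecutive $T$-steps into one $S$-step; the new glue-point discontinuity must then be identified with the image of a surviving $\beta_i$ under an appropriate power of $S$. A subsidiary technical issue is the non-generic case $T(I_{j^*}) \cap I_{j^*} \ne \emptyset$, where the return time to $J$ can exceed $2$ and the tall column has height greater than $2$; this can be handled either by refining the choice of $I_{j^*}$ or by a more intricate tower analysis that still reduces to a tower over a rotation after induction.
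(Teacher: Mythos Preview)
Your inductive strategy is plausible in outline but leaves open precisely the step you flag as the ``main obstacle'': proving that the induced map $S=T_J$ again has all its discontinuities in a single $S$-orbit. You sketch why this ought to hold, but the details matter. The discontinuities of $S$ are not simply the surviving $\beta_i$'s; they include the glue point where the two components of $J$ meet after conjugation, and possibly the endpoints of the level set $\{r_J=2\}$. You need to show each of these lies in the $T$-orbit of the $\beta_i$'s (since $S$-orbits are intersections of $T$-orbits with $J$), and this bookkeeping is not carried out. The non-generic case $T(I_{j^*})\cap I_{j^*}\neq\emptyset$ is also left open; there the return time can be arbitrarily large, so the ``one column of height 2'' picture breaks down and you would need a separate argument. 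Finally, even granting all of this, you still have to invoke the fact that a tower over a tower is conjugate to a tower over the base, which is true but adds another layer.

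The paper avoids all of this by inducing once rather than $m-2$ times. The key observation is that if the discontinuities lie in a single orbit, one can list them along that orbit as a single maximal $T$-chain $C$; the points of $C$ partition $[0,1)$ into subintervals, and \emph{any one} of these subintervals $J$ has the property that $T_J$ is already a $2$-IET. The reason is that the partition set $P$ for the first return map (those interior points of $J$ whose forward orbit hits $D(T)\cup\{a,b\}$ before returning to $J$) is in bijection with the set of initial points of maximal $T$-chains---of which there is exactly one. Thus $|P|=1$, $T_J$ exchanges two intervals, and $T$ is conjugate to a tower over the minimal rotation $T_J$ directly. This sidesteps both the single-orbit preservation question and the non-generic case entirely.
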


Theorem \ref{thm:roots idoc} is closely connected with Theorem \ref{thm:one orbit}. Specifically, if $T$ is a minimal 3-IET which is not of rotation type and which does not satisfy the IDOC, then its two discontinuities $\beta_1$ and $\beta_2$ must belong to the same orbit. Thus, according to Theorem \ref{thm:one orbit}, in order to show that $T$ has a nontrivial root, it suffices to show that towers over minimal rotations always have nontrivial roots. We will do this, and more. Our final result gives a classification of towers over minimal rotations.

\begin{thm}\label{thm:tower classification}
Let $T$ be a tower of type $(m,n)$ over a minimal rotation. Then 
\begin{enumerate}[(1)]
\item
$T$ is conjugate to a minimal rotation if and only if $m$ and $n$ are relatively prime.
\item
If $m$ and $n$ are not relatively prime, and $d>1$ is their greatest common divisor, then $T$ is conjugate to a tower of constant height $d$ over a minimal rotation.
\end{enumerate}    
In either case, $T$ has an $n^{th}$ root in $\mathbb{G}$ for some $n\geq 2$.
\end{thm}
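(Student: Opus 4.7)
The plan is to analyze the tower $T$ of type $(m,n)$ over the minimal rotation $R_\alpha$ by realizing it, after rescaling, as a rotation (or a tower of constant height over a rotation) on the enlarged circle $\mathbb{R}/L\mathbb{Z}$, where $L = m\lambda_1 + n\lambda_2$ is the total measure of the tower. I seek a conjugacy $\Phi : X_f \to \mathbb{R}/L\mathbb{Z}$ of the form $\Phi(x,i) = x + (i-1)\gamma \pmod L$ for a constant $\gamma$ that intertwines the tower map with the rotation $R_\gamma$. The intertwining condition reduces to $R_\alpha(x) \equiv x + f(x)\gamma \pmod L$ for every $x$, yielding the congruences $m\gamma \equiv \lambda_2 \pmod L$ on $I_1$ and $n\gamma \equiv -\lambda_1 \pmod L$ on $I_2$. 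Using $L = m\lambda_1 + n\lambda_2$, simultaneous solvability reduces to the Bezout identity $mk_2 - nk_1 = 1$ admitting integer solutions, equivalently to $\gcd(m,n) = 1$.

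For part (1) in the forward direction, assuming $\gcd(m,n) = 1$, I fix integers $k_1, k_2$ with $mk_2 - nk_1 = 1$ and set $\gamma = (\lambda_2 + k_1 L)/m$. To see $\Phi$ is a bijection onto $\mathbb{R}/L\mathbb{Z}$, it suffices (the total measure being $L$) to check that the $m + n$ arcs $\Phi(I_j \times \{i\})$ are pairwise disjoint. By translation invariance, this reduces to verifying the return-time conditions $R_\gamma^k(I_1) \subset [1, L)$ for $1 \le k \le m-1$ and $R_\gamma^k(I_2) \subset [1, L)$ for $1 \le k \le n-1$; i.e., the intermediate iterates leave the base $[0,1)$. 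This is a Diophantine check following directly from the explicit form of $\gamma$ and the Bezout identity. After rescaling $\mathbb{R}/L\mathbb{Z} \to [0,1)$, this yields that $T$ is conjugate to a minimal rotation.

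For part (2), when $d = \gcd(m,n) > 1$, write $m = dm'$ and $n = dn'$ with $\gcd(m',n') = 1$ and introduce the bijection $\Psi(x,i) = \bigl((x, \lceil i/d \rceil),\,((i-1) \bmod d) + 1\bigr)$. A routine case analysis shows that $\Psi$ realizes $T$ as the tower of constant height $d$ over the inner tower of type $(m',n')$ over $R_\alpha$, intertwining the two tower maps; in natural rescaled layouts, $\Psi$ is a permutation of subintervals of equal lengths, hence an IET conjugacy. Applying part (1) to the inner tower shows it is conjugate to a minimal rotation, so $T$ itself is conjugate to a tower of constant height $d$ over a minimal rotation. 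The reverse direction of part (1) then follows: any tower of constant height $d > 1$ over a minimal rotation has its state space partitioned into $d$ subintervals of length $1/d$ cyclically permuted by the tower map. Under any IET conjugacy, this becomes a partition of $[0,1)$ into $d$ finite unions of subintervals of total length $1/d$ still cyclically permuted; but no such partition exists for a minimal rotation $R_\beta$, since each piece would then be $R_\beta^d$-invariant, contradicting ergodicity of $R_{d\beta}$ (which is minimal as $d\beta$ is irrational).

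For the final claim about roots: if $\gcd(m,n) = 1$, then $T$ is conjugate to a minimal rotation $R_\gamma$, which admits $R_{\gamma/k}$ as a $k$-th root for every $k \ge 2$. If $\gcd(m,n) = d > 1$, I exhibit a $(d+1)$-th root of the tower $T''$ of constant height $d$ over any rotation $R_{\alpha'}$ directly: define $S(x,i) = (R_{\theta_i}(x),\,(i \bmod d) + 1)$ with $\theta_i = -\alpha'/(d+1)$ for $1 \le i \le d-1$ and $\theta_d = d\alpha'/(d+1)$. Then $S$ is an IET (a composition of rotations on each level with a cyclic level-shift), and iterating $S$ a total of $d+1$ times starting at level $i$ cycles the level by one and accumulates total rotation $\theta_i + (\theta_1 + \cdots + \theta_d) = \theta_i + \alpha'/(d+1)$, which equals $0$ for $i < d$ and $\alpha'$ for $i = d$, exactly matching $T''$. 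The main obstacle in the plan is the disjointness check in the forward direction of part (1), which, though routine, requires care at the boundary cases where arcs touch but do not overlap.
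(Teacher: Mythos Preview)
Your approach is correct and genuinely different from the paper's. The paper proceeds inductively via a Euclidean-algorithm step (Lemma~3.1): it shows that a tower of type $(m_1,m_2)$ with $m_1<m_2$ is conjugate (possibly after passing to the inverse) to a tower of type $(m_1,m_2-m_1)$, and iterates until reaching constant height $d=\gcd(m_1,m_2)$. You instead build a single explicit conjugacy $\Phi(x,i)=x+(i-1)\gamma$ to the rotation $R_\gamma$ on $\mathbb{R}/L\mathbb{Z}$, extracting $\gamma$ from a B\'ezout relation; for $d>1$ you factor the tower as height~$d$ over an $(m',n')$-tower and reduce to the coprime case. Your reverse implication in (1) via ergodicity of $R_\beta^d$ and your explicit $(d{+}1)$-st root with $\theta_i=-\alpha'/(d{+}1)$, $\theta_d=d\alpha'/(d{+}1)$ match the paper's reasoning (the paper solves the same linear system abstractly rather than writing down a particular solution). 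The trade-off: the paper's induction is short and requires no injectivity check, while your construction is more transparent---one sees the rotation angle $\gamma$ directly and why B\'ezout is exactly the obstruction.

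The one place your write-up is thin is the bijectivity of $\Phi$. Your claimed reduction to the return-time conditions $R_\gamma^k(I_j)\cap[0,1)=\emptyset$ for $1\le k<\text{height}(I_j)$ is correct, but the phrase ``by translation invariance'' undersells why: the point is that once those conditions hold, the first-return map of $R_\gamma$ to $[0,1)\subset\mathbb{R}/L\mathbb{Z}$ has return times exactly $m$ on $I_1$ and $n$ on $I_2$, so the Kakutani skyscraper over $[0,1)$ has total measure $m\lambda_1+n\lambda_2=L$ and its levels---which are precisely your arcs $\Phi(I_j\times\{i\})$---partition $\mathbb{R}/L\mathbb{Z}$. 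You should say this, and you should actually carry out the ``Diophantine check'' rather than assert it: with $\gamma=(\lambda_2+k_1L)/m$ one verifies $k\gamma\pmod L$ lies in $[1,L-\lambda_1)$ for $1\le k\le m-1$ by a short computation using $mk_2-nk_1=1$. Without this, the heart of part~(1) is only sketched.
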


\section{The Rank of an IET}

In this section we define the rank of an IET. This will allow us to state our main result, a generalization of Theorem \ref{thm:roots general}.

\begin{defi}\label{defi:rank}
Let $T$ be an IET. Let $\gamma_1 < \gamma_2 < \cdots < \gamma_{m-1}$ be the points at which $T$ is discontinuous. Let $\gamma_0=0$ and $\gamma_m=1$. Let $l_j = \gamma_j - \gamma_{j-1}$ for $j=1,2,\dots,m$. We will refer to the dimension of the $\mathbb{Q}$-vector space spanned by $l_1,l_2,\dots,l_m$ as the \textit{rank} of $T$. This will be denoted by rank$(T)$.
\end{defi}

The term ``rank" was originally used in this setting by Boshernitzan \cite{Bosh_RankTwo}, who showed that minimal rank two IETs are always uniquely ergodic. Bosherntizan's paper also describes an algorithm which tests a rank two IET for minimality and aperiodicity.

Recall the notation of Definition \ref{defi:iet}. The combinatorial data $(\pi,\lambda)$ which goes into the definition of an IET does not always reflect the number of points at which $T_{(\pi,\lambda)}$ is discontinuous. Since $T$ is a translation on each of the intervals $I_j=[\beta_{j-1},\beta_j)$, it is clear that the discontinuities of $T$ must be among $ \beta_1,\beta_2,\dots,\beta_{m-1}$. However, $T$ may not be discontinuous at all of these points. Whether or not $T$ is continuous at these points depends on the permutation $\pi$. Specifically, if $1\leq i \leq m-1$, then $T$ is discontinuous at $\beta_i$ if and only if $\pi(i+1)\neq \pi(i)+1$. Motivated by this, we make the following definition.

\begin{defi}\label{defi:separating}
We will say that $\pi \in S_m$ is \textit{separating} if $\pi(i+1)\neq \pi(i)+1$ for $1\leq i \leq m-1$.
\end{defi}

For example, the permutation $\tau =(321)\in S_3$ is separating, while the permutation $\sigma = (312) \in S_3$ is not. The following result shows that there is no loss of generality in only considering IETs defined by separating permutations. 

\begin{prop}\label{prop:existence separating}
Let $T$ be an IET with precisely $m-1$ discontinuities. There exists a separating permutation $\pi \in S_m$ and a vector $\lambda \in \Delta_m$, both of which are unique, such that $T=T_{(\pi,\lambda)}$.
\end{prop}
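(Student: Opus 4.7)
The plan is to prove existence by directly constructing $(\pi,\lambda)$ from the discontinuities of $T$, and then prove uniqueness by observing that the separating hypothesis forces any valid $(\pi,\lambda)$ to have the same partition points as the discontinuities of $T$.

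For existence, first I would let $\gamma_0 = 0 < \gamma_1 < \cdots < \gamma_{m-1} < \gamma_m = 1$ be the discontinuities of $T$ together with the endpoints, and set $\lambda_j = \gamma_j - \gamma_{j-1}$, so that $\lambda = (\lambda_1,\ldots,\lambda_m) \in \Delta_m$ by construction. Since $T$ is continuous on each $I_j = [\gamma_{j-1},\gamma_j)$ and is piecewise a translation, it is a translation on each $I_j$. Because $T$ is a bijection of $[0,1)$, the images $T(I_1),\ldots,T(I_m)$ form another partition of $[0,1)$ into left-closed right-open intervals, and I define $\pi \in S_m$ by letting $\pi(j)$ be the left-to-right position of $T(I_j)$ in this second partition. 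An inspection of the translation constants then shows that $T = T_{(\pi,\lambda)}$ in the sense of Definition \ref{defi:iet}.

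The main thing to verify is that this $\pi$ is separating. Suppose toward contradiction that $\pi(i+1) = \pi(i)+1$ for some $i \in \{1,\ldots,m-1\}$. Then $T(I_{i+1})$ lies immediately to the right of $T(I_i)$ in $[0,1)$. Comparing the right endpoint of $T(I_i)$ (which is the left-limit $\lim_{x\to \gamma_i^-} T(x)$) with the left endpoint of $T(I_{i+1})$ (which is $T(\gamma_i)$), these two values are equal, so the translation constants on $I_i$ and $I_{i+1}$ coincide. Hence $T$ is continuous at $\gamma_i$, contradicting the choice of $\gamma_i$ as a discontinuity. This is the only delicate step in the argument.

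For uniqueness, suppose $T = T_{(\pi',\lambda')}$ for some separating $\pi' \in S_{m'}$ and $\lambda' \in \Delta_{m'}$. Let $\beta'_i = \sum_{k\leq i}\lambda'_k$. Since $T_{(\pi',\lambda')}$ is a translation on each $[\beta'_{i-1},\beta'_i)$, its discontinuities lie among $\{\beta'_1,\ldots,\beta'_{m'-1}\}$; and since $\pi'$ is separating, the criterion recalled just before Definition \ref{defi:separating} shows that $T$ actually is discontinuous at every $\beta'_i$. Thus the set $\{\beta'_1,\ldots,\beta'_{m'-1}\}$ coincides with $\{\gamma_1,\ldots,\gamma_{m-1}\}$, forcing $m' = m$ and $\beta'_i = \gamma_i$ for every $i$, hence $\lambda' = \lambda$. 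Finally, $\pi'(j)$ is determined as the left-to-right position of $T(I_j)$ in the image partition, which is the same integer as $\pi(j)$, so $\pi' = \pi$. This completes the proof.
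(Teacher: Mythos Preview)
Your proof is correct and follows essentially the same approach as the paper's: both construct $\lambda$ from the discontinuity points $\gamma_j$, define $\pi$ by the order of the image intervals, and use the criterion before Definition~\ref{defi:separating} (you via its contrapositive) to check that $\pi$ is separating; uniqueness in both cases comes from the fact that a separating permutation forces the $\beta_i$ to coincide with the $\gamma_i$. The only differences are cosmetic---you do existence first and spell out the separating verification and the $m'=m$ step in more detail than the paper does.
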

\begin{proof}
Let $\gamma_1 < \gamma_2 < \cdots < \gamma_{m-1}$ be the points at which $T$ is discontinuous. Let $\gamma_0=0$ and $\gamma_m=1$. 

Suppose that $\pi$ and $\lambda$ exist. Since $\pi$ is separating, $T=T_{(\pi,\lambda)}$ must be discontinuous at $\beta_1,\beta_2,\dots,\beta_{m-1}$. Since $T$ has precisely $m-1$ discontinuities, it follows that $\gamma_j=\beta_j$ for $j=1,2,\dots,m-1$. This uniquely specifies $\lambda$, since $\lambda_j=\beta_j - \beta_{j-1} = \gamma_j - \gamma_{j-1}$. It also uniquely specifies $\pi$, since the way in which $T$ rearranges the intervals $[\gamma_{j-1},\gamma_j)$ is intrinsic to the map $T$. This proves that $\pi$ and $\lambda$ are unique if they exist. 

To prove existence, let $\lambda_j= \gamma_j - \gamma_{j-1}$ for $j=1,2,\dots,m$, . Since $\sum_j \lambda_j =1$, the vector $\lambda=(\lambda_1,\lambda_2,\dots,\lambda_m)$ belongs to $\Delta_m$. Let $\pi \in S_m$ be the permutation which describes how the points $\gamma_0,\gamma_1,\dots,\gamma_{m-1}$ are rearranged by $T$. That is, if $T(\gamma_{j_0}) < T(\gamma_{j_1}) < \cdots < T(\gamma_{j_{m-1}})$, then $\pi(j_i + 1)=i+1$ for $i=0,1,\dots,m-1$. By construction, $T=T_{(\pi,\lambda)}$. Since $T$ is discontinuous at $\gamma_1, \gamma_2, \dots , \gamma_{m-1}$, $\pi$ is separating. 
\end{proof}

If $T_{(\pi,\lambda)}$ is an $m$-IET defined by a separating permutation $\pi \in S_m$, then the discontinuities of $T$ are precisely the points $\beta_1,\beta_2,\dots,\beta_{m-1}$. Hence $\text{rank}(T)$ is equal to the dimension of the $\mathbb{Q}$-vector space spanned by $\lambda_1,\lambda_2,\dots,\lambda_m$. On the other hand, if $\pi$ is not separating, then $\text{rank}(T_{\pi,\lambda)})$ may be smaller than the dimension of the $\mathbb{Q}$-vector space spanned by $\lambda_1,\lambda_2,\dots,\lambda_m$. 

We can now state the main result of this paper. Notice that if $T$ is an $m$-IET defined by a separating permutation, then $\text{rank}(T)$ can assume any value from $1$ to $m$.

\begin{thm}\label{thm:roots rank}
Let $T$ be a minimal $m$-IET defined by a separating permutation. If $T$ has a $n^{th}$ root in $\mathbb{G}$ for some $n\geq 2$, then $\text{rank}(T)\leq 1 + \lfloor m/2 \rfloor$. 
\end{thm}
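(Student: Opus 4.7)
The plan is to show $\text{rank}(T)=\text{rank}(S)$ whenever $T=S^n$ (with $S$ automatically minimal since $T$ is), and then bound $\text{rank}(S)$ in terms of the $S$-orbit structure of $\text{Disc}(S)$. By Proposition \ref{prop:existence separating} we may present $S$ with a separating permutation on $p$ subintervals, so $|\text{Disc}(S)|=p-1$. The starting observation is that $T=S^n$ fails to be locally a translation at $x$ precisely when $S$ fails at some $S^k(x)$ with $0\leq k\leq n-1$, whence $\text{Disc}(T)=\bigcup_{k=0}^{n-1}S^{-k}(\text{Disc}(S))$. The inclusion $\text{Disc}(S)\subseteq\text{Disc}(T)$ immediately gives $\text{rank}(S)\leq\text{rank}(T)$. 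Conversely, each $\gamma=S^{-k}\delta$ differs from $\delta$ by a $\mathbb{Z}$-linear combination of $S$-translation parameters, each itself a $\mathbb{Z}$-combination of $\lambda_1^S,\ldots,\lambda_p^S$; hence every $\gamma$ lies in the $\mathbb{Q}$-span of $\{1,\lambda^S\}$, giving $\text{rank}(T)\leq\text{rank}(S)$.

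Next I would partition $\text{Disc}(S)$ into its $S$-orbit classes $C_1,\ldots,C_o$ of sizes $q_1,\ldots,q_o$ summing to $p-1$. Parametrizing the full $S$-orbit of $C_i$ by $\mathbb{Z}$, the elements of $\text{Disc}(T)$ lying in this orbit are exactly those whose orbit-index lies in $\bigcup_{j=1}^{q_i}\{a_{i,j}-n+1,\ldots,a_{i,j}\}$, where the $a_{i,j}$ are the orbit-indices of the elements of $C_i$. This union always contains at least $q_i+n-1$ integers (the minimum being achieved when the $a_{i,j}$ are consecutive). Summing over $i$,
$$m-1 \;\geq\; (p-1) + o(n-1), \qquad \text{so} \qquad o \;\leq\; \frac{m-p}{n-1}.$$

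The technical core of the argument is the lemma $\text{rank}(S)\leq 1+o$. Choosing one representative $\delta^*_i\in C_i$ per orbit class, every non-representative $\delta^S_j=S^{a_j}\delta^*_{i(j)}$ satisfies
$$\delta^S_j - \delta^*_{i(j)} \;=\; \sum_{k=0}^{a_j-1} t^S\bigl(S^k\delta^*_{i(j)}\bigr),$$
and expanding each $t^S$ in the basis $\{1,\lambda^S\}$ yields a nontrivial $\mathbb{Q}$-linear relation on $\{1,\delta_1^S,\ldots,\delta_{p-1}^S\}$. Collecting all $(p-1)-o$ such relations, the main obstacle is verifying they are $\mathbb{Q}$-linearly independent; I would do so by ordering the non-representatives so that the relation associated to each new $\delta^S_j$ contributes a leading coefficient at $\delta^S_j$ itself which is not cancelled by any preceding relation (with the minimality of $S$ used to rule out the degenerate cancellations). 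Granting this, $\text{rank}(S)\leq p-((p-1)-o)=1+o$, and combining with the trivial bound $\text{rank}(S)\leq p$ gives
$$\text{rank}(T) \;=\; \text{rank}(S) \;\leq\; \min\!\Bigl(p,\;1+\tfrac{m-p}{n-1}\Bigr).$$
Optimizing this quantity over integers $p\geq 2$ and $n\geq 2$, the maximum is realized at $n=2$ and $p=\lceil(m+1)/2\rceil$, yielding the bound $\text{rank}(T)\leq 1+\lfloor m/2\rfloor$.
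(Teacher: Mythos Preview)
Your counting step contains a genuine error. You assert that $\text{Disc}(T)=\bigcup_{k=0}^{n-1}S^{-k}(\text{Disc}(S))$ and then deduce $m-1\ge(p-1)+o(n-1)$. Only the inclusion $\text{Disc}(T)\subseteq\bigcup_{k=0}^{n-1}S^{-k}(\text{Disc}(S))$ holds in general: a point $x$ with $S^{k}(x)\in\text{Disc}(S)$ for some $0\le k\le n-1$ can nonetheless be a point of continuity of $S^{n}$, because the jump introduced at step $k$ may be undone by a later jump. The paper's own sharpness example for even $m=2n$ (Section~2) already breaks your inequality: there $T=S^{\,n+1}$ with $S$ of the same shape as $T$, namely $S=PR_{1,\beta_{1}}\cdots R_{n,\beta_{n}}$, so $S$ has exactly $m-1$ discontinuities and $p=m$. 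Plugging into your bound gives $o\le(m-p)/n=0$, which is absurd since $o\ge 1$. The same example also shows that $\text{Disc}(S)\subseteq\text{Disc}(T)$ can fail (the rotation discontinuities sit at $j-\beta_{j}$ for $S$ but at $j-\alpha_{j}$ for $T$), so your argument for $\text{rank}(S)\le\text{rank}(T)$ is also invalid, though fortunately you never need that direction.

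What does survive is the inequality $\text{rank}(T)\le\text{rank}(S)$ and the lemma $\text{rank}(S)\le 1+o$. The paper proves the latter not by extracting linear relations as you sketch, but by inducing $S$ on a subinterval chosen so that the first return map is an $(o+1)$-IET (Lemma~\ref{lem:optimal subinterval}); the $o+1$ induced subinterval lengths then $\mathbb{Z}$-span every $\lambda_{i}^{S}$, since the Rokhlin towers over them tile $[0,1)$ and refine the continuity partition of $S$. Your linear-relation route to this lemma may be completable, but note that the relation coming from a non-representative $\delta_{j}^{S}$ is a relation among the $\lambda_{i}^{S}$, not among the $\delta_{i}^{S}$, so the ``leading coefficient at $\delta_{j}^{S}$'' independence heuristic does not apply as stated. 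The missing orbit-count ingredient is the bound $o\le\lfloor m/2\rfloor$ (Lemma~\ref{lem:orbit counting}): for each maximal $S$-chain $\{x_{1},\dots,x_{k}\}$ the paper shows that the two specific points $S^{-(n-1)}(x_{1})$ and $x_{k}$ are \emph{genuine} discontinuities of $T$, using maximality of the chain precisely to rule out the cancellations that your count ignores. Combining $\text{rank}(T)\le\text{rank}(S)\le 1+o\le 1+\lfloor m/2\rfloor$ finishes the proof without any optimisation over $p$ and~$n$.
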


The proof of Theorem \ref{thm:roots rank} will be given in the fifth section of this paper, after we have introduced the necessary machinery. For now, we note that Theorem \ref{thm:roots general} follows directly from Theorem \ref{thm:roots rank}.  
\vspace{5pt}

\noindent \textit{Proof of Theorem \ref{thm:roots general}}. Let $T$ be a minimal IET which is not of rotation type. Suppose that the lengths of the exchanged subintervals are linearly independent over $\mathbb{Q}$. Let $m-1$ be the number of discontinuities of $T$. The assumption that $T$ is not of rotation type implies that $T$ has at least two discontinuities. So $m\geq 3$. Choose $\pi\in S_m$ and $\lambda\in \Delta_m$ according to Proposition \ref{prop:existence separating}. The assumption that the lengths of the exchanged subintervals are linearly independent over $\mathbb{Q}$ implies that the dimension of the $\mathbb{Q}$-vector space spanned by $\lambda_1,\lambda_2,\dots,\lambda_m$ is $m$. Since $\pi$ is separating, $\text{rank}(T)=m$. Since $m\geq 3$, $\text{rank}(T)> 1 + \lfloor m/2 \rfloor$, so Theorem \ref{thm:roots rank} implies that $T$ has no nontrivial roots in $\mathbb{G}$. \qed 
\vspace{5pt}

Recall that a permutation $\pi \in S_m$ is said to be \textit{irreducible} if $\pi(\lbrace 1,2,\dots,k \rbrace) \neq \lbrace 1,2,\dots,k \rbrace$ for any $k<m$. A well-known result of Keane asserts that if $\pi\in S_m$ is irreducible and the coordinates of $\lambda\in \Delta_m$ are linearly independent over $\mathbb{Q}$, then the IET $T_{(\pi,\lambda)}$ is minimal \cite{Keane1}. We will also have $\text{rank}(T_{(\pi,\lambda)})=m$ in this case, provided that $\pi$ is separating. Combining these observations with Theorem \ref{thm:roots rank} proves the following result, which justifies the statement that ``most" IETs do not have any nontrivial roots in $\mathbb{G}$. 

\begin{cor}\label{cor:most}
Let $m\geq 3$. Let $\pi \in S_m$ be separating and irreducible. Let $$A=\lbrace \lambda \in \Delta_m : T_{(\pi,\lambda)} \text{ has no } n^{th} \text{ root in } \mathbb{G} \text{ for any } n\geq 2 \rbrace.$$ Then $A$ is a residual subset of $\Delta_m$ of full Lebesgue measure.
\end{cor}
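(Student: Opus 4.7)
The plan is to exhibit a single explicit subset $B \subseteq \Delta_m$ that is simultaneously residual, of full Lebesgue measure, and contained in $A$. I will take
$$B = \bigl\{\lambda \in \Delta_m : \lambda_1,\lambda_2,\dots,\lambda_m \text{ are linearly independent over } \mathbb{Q}\bigr\}.$$

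First I will verify that $B \subseteq A$. Given $\lambda \in B$, the irreducibility of $\pi$ combined with Keane's minimality criterion guarantees that $T_{(\pi,\lambda)}$ is minimal. Since $\pi$ is separating, the observation recorded just before Theorem \ref{thm:roots rank} identifies $\text{rank}(T_{(\pi,\lambda)})$ with the dimension of the $\mathbb{Q}$-span of $\lambda_1,\dots,\lambda_m$, which is $m$ by the choice of $\lambda$. For $m \geq 3$ one has $1+\lfloor m/2\rfloor \leq m-1 < m$, so the contrapositive of Theorem \ref{thm:roots rank} shows that $T_{(\pi,\lambda)}$ has no $n^{th}$ root in $\mathbb{G}$ for any $n\geq 2$. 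Hence $\lambda \in A$.

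Next I will show that $B$ is residual and of full measure in $\Delta_m$. The complement decomposes as
$$\Delta_m \setminus B = \bigcup_{q \in \mathbb{Z}^m\setminus\{0\}} \bigl(H_q \cap \Delta_m\bigr), \qquad H_q = \Bigl\{\lambda\in\mathbb{R}^m : \sum_{i=1}^m q_i\lambda_i = 0\Bigr\},$$
a countable union of intersections of linear hyperplanes with $\Delta_m$. When all the $q_i$ coincide with a common value $q\neq 0$, $H_q$ is parallel to, and disjoint from, the affine hyperplane containing $\Delta_m$, so $H_q \cap \Delta_m=\emptyset$. In every other case the normals $(q_1,\dots,q_m)$ and $(1,\dots,1)$ are not parallel, so $H_q$ meets the affine hull of $\Delta_m$ transversally in an affine subspace of codimension one inside $\Delta_m$; this intersection is therefore closed in $\Delta_m$, nowhere dense there, and of $(m-1)$-dimensional Lebesgue measure zero. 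A countable union of such sets is meager and of measure zero, which shows that $B$ is a dense $G_\delta$ of full measure, and the corollary follows.

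There is no serious obstacle here: once Theorem \ref{thm:roots rank} is in hand, the argument is a routine combination of the Baire category theorem, countable additivity, and the elementary fact that a proper affine subspace of $\Delta_m$ is small in both the topological and measure-theoretic senses. The only mild subtlety, already handled above, is to rule out the possibility that the affine constraint $\sum_i \lambda_i = 1$ forces some $H_q$ to contain all of $\Delta_m$, which is disposed of by the parallel-normals computation.
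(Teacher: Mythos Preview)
Your proof is correct and follows exactly the approach the paper sketches in the paragraph preceding the corollary: use Keane's criterion and the separating hypothesis to see that rationally independent $\lambda$ give minimal IETs of rank $m$, then invoke Theorem~\ref{thm:roots rank}. The paper leaves the ``residual and full measure'' assertion for the set of rationally independent length vectors as understood, whereas you have spelled out the standard hyperplane decomposition; this is a harmless elaboration, not a different route.
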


For any $m\geq 2$ there exist minimal $m$-IETs defined by separating permutations which have nontrivial roots and have rank exactly $1 + \lfloor m/2 \rfloor$. Thus the bound on $\text{rank}(T)$ given in Theorem \ref{thm:roots rank} is optimal. In describing these examples it will be convenient to consider even and odd $m$ separately. It will also be convenient to define IETs on finite intervals other than $[0,1)$. This causes no essential change, since we can always rescale. 

Suppose first that $m$ is even, say $m=2n$. We will define an IET on the interval $[0,n)$. For $j=1,2,\dots,n$, let $I_j=[j-1,j)$. Let $\alpha_1,\alpha_2,\dots,\alpha_{n}\in \mathbb{R}$ be some parameters which will be specified later. Let $R_{1,\alpha_1}$ be the IET which acts on $I_1=[0,1)$ by $x\mapsto x + \alpha_1 \hspace{2pt}(\text{mod }1)$ and which acts as the identity on the intervals $I_2,I_3,\dots,I_n$. Similarly, for $j=2,3,\dots,n$ let $R_{j,\alpha_j}$ be the IET which acts as a rotation by $\alpha_j$ on $I_j$ and leaves the other intervals fixed. Let $P$ be the periodic IET which cyclically permutes the intervals $I_1,I_2,\dots,I_n$.  

If $\alpha_1,\alpha_2,\dots,\alpha_{n}\in \mathbb{R}$ are chosen so that $1,\alpha_1,\alpha_2,\dots,\alpha_{n}$ are linearly independent over $\mathbb{Q}$, then the map $T=PR_{1,\alpha_1}R_{2,\alpha_2} \cdots R_{n,\alpha_n}$ is a minimal $m$-IET. By construction, the permutation which describes $T$ is separating, and $\text{rank}(T)=1 + \lfloor m/2 \rfloor$. We will show that $T$ has an $(n+1)^{st}$ root. In fact, we claim that if $\beta_1,\beta_2,\dots,\beta_n\in \mathbb{R}$ are chosen appropriately, then the map $S=PR_{1,\beta_1}R_{2,\beta_2} \cdots R_{n,\beta_n}$ satisfies $S^{1+n}=T$.

In order to see this, consider how $S$ acts on $I_1$. Applying $S$ once results in this interval being rotated by $\beta_1$ and then moved onto $I_2$. A second application of $S$ rotates the interval by $\beta_2$ and then moves the interval onto $I_3$. After $n$ iterates, the interval has returned to $I_1$ but has been rotated by $\beta_1 + \beta_2 + \cdots + \beta_n$. Applying $S$ one more time causes another rotation by $\beta_1$ and then moves the interval onto $I_2$. So the net effect of $S^{1+n}$ is to rotate $I_1$ by $2\beta_1 + \beta_2 + \cdots + \beta_n$ and then move $I_1$ onto $I_2$. Meanwhile, $T$ rotates $I_1$ by $\alpha_1$ and then moves $I_1$ onto $I_2$. So the action of $S^{1+n}$ on $I_1$ coincides with the action of $T$ if and only if $2\beta_1 + \beta_2 + \cdots + \beta_n\equiv \alpha_1 \hspace{2pt} (\text{mod }1)$.

We can analyze the action of $S^{1+n}$ on the intervals $I_2,I_3,\dots,I_n$ in a similar way. We find that $S^{1+n}=T$ if and only if
\begin{align*}
2\beta_1 + \beta_2 + \beta_3 + \cdots + \beta_n\equiv \alpha_1 \hspace{2pt} (\text{mod }1)\\
\beta_1 + 2\beta_2 + \beta_3 + \cdots + \beta_n\equiv \alpha_2 \hspace{2pt} (\text{mod }1)\\
\beta_1 + \beta_2 + 2\beta_3 + \cdots + \beta_n\equiv \alpha_3 \hspace{2pt} (\text{mod }1)\\
\vdots \hspace{90pt}\\
\beta_1 + \beta_2 + \beta_3 + \cdots + 2\beta_n\equiv \alpha_n \hspace{2pt} (\text{mod }1)
\end{align*}
\noindent Therefore, we can let $(\beta_1,\beta_2,\dots,\beta_n)$ be any solution to the above system of equations.

If $m$ is odd, say $m=2n+1$, we can construct a similar example. This time, we divide the interval $[0,n+1)$ into $n+1$ subintervals $I_1,I_2,\dots,I_{n+1}$ of equal length. We define the rotations $R_{j,\alpha_j}$ as we did previously. Let $P$ be the periodic IET which cyclically permutes the intervals $I_1,I_2,\dots,I_{n+1}$. As before, if $\alpha_1,\alpha_2,\dots,\alpha_{n}\in \mathbb{R}$ are chosen so that $1,\alpha_1,\alpha_2,\dots,\alpha_{n}$ are linearly independent over $\mathbb{Q}$, then $T=PR_{1,\alpha_1}R_{2,\alpha_2} \cdots R_{n,\alpha_n}$ is a minimal $m$-IET. The permutation which describes $T$ is separating, and $\text{rank}(T)=1 + \lfloor m/2 \rfloor$. If $(\beta_1,\beta_2,\dots,\beta_{n+1})$ is a solution to the system of equations
\begin{align*}
2\beta_1 + \beta_2 + \beta_3 + \cdots + \beta_n + \beta_{n+1}\equiv \alpha_1 \hspace{2pt} (\text{mod }1)\\
\beta_1 + 2\beta_2 + \beta_3 + \cdots + \beta_n+ \beta_{n+1}\equiv \alpha_2 \hspace{2pt} (\text{mod }1)\\
\beta_1 + \beta_2 + 2\beta_3 + \cdots + \beta_n+ \beta_{n+1}\equiv \alpha_3 \hspace{2pt} (\text{mod }1)\\
\vdots \hspace{100pt}\\
\beta_1 + \beta_2 + \beta_3 + \cdots + 2\beta_n+ \beta_{n+1}\equiv \alpha_n \hspace{2pt} (\text{mod }1)\\
\beta_1 + \beta_2 + \beta_3 + \cdots + \beta_n+ 2\beta_{n+1}\equiv \hspace{3pt}0 \hspace{3pt}\hspace{3pt} (\text{mod }1)
\end{align*}
\noindent then the map $S=PR_{1,\beta_1}R_{2,\beta_2} \cdots R_{1+n,\beta_{1+n}}$ satisfies $S^{2+n}=T$.
\\

\section{Classification of Towers over Rotations}

In this section, we will prove Theorem \ref{thm:tower classification}. The proof is self-contained. We begin with the following lemma.

\begin{lem}\label{lem:tower induction}
Let $T$ be a tower of type $(m_1,m_2)$ over a rotation. Suppose that $m_1 < m_2$. Then either $T$ is conjugate to a tower of type $(m_1,m_2-m_1)$ over a rotation or $T^{-1}$ is conjugate to a tower of type $(m_2-m_1,m_1)$ over a rotation. If $m_1>m_2$, the analogous result is true. 
\end{lem}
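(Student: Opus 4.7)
The plan is to execute a Rauzy--Veech-type induction step directly on the tower structure. Fix a model: realize $R$ as the 2-IET on $[0,1)$ exchanging $I_1 = [0,\lambda_1)$ and $I_2 = [\lambda_1, 1)$ via the permutation $(21)$, and lay $T = R_f$ out on $[0, A)$ with $A = m_1 \lambda_1 + m_2 \lambda_2$, stacking level $k$ of column $j$ above the preceding one. Two cases arise depending on whether $\lambda_1 < \lambda_2$ or $\lambda_1 > \lambda_2$. I would carry out the first case, which yields the conclusion that $T$ itself is conjugate to a tower of type $(m_1, m_2 - m_1)$; the second case is symmetric, obtained by applying the analogous construction to $T^{-1}$, which is a tower of type $(m_2, m_1)$ over $R^{-1}$ whose base 2-IET has ``first length $<$ second length'' whenever $\lambda_1 > \lambda_2$, and yields the conjugacy of $T^{-1}$ with a tower of type $(m_2 - m_1, m_1)$.

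Assume $\lambda_1 < \lambda_2$. The dynamical asymmetry to exploit is that $R(I_1) = [\lambda_2, 1) \subset I_2$, so the top of column 1 flows into the bottom of column 2. Inside the laid-out tower, define the candidate new base $B = B_1 \cup B_2$, where $B_1$ is the union of the layout images of $I_1 \times \{1\}$ and $I_2 \times \{1\}$ (the bottom levels of both columns), and $B_2$ is the layout image of $I_2 \times \{m_1 + 1\}$. A direct return-time computation under $T$ shows that the first return to $B$ takes exactly $m_1$ steps from $B_1$ and exactly $m_2 - m_1$ steps from $B_2$: from the $I_1$-piece of $B_1$, $m_1-1$ upward steps in column 1 followed by one application of $R$ land in the $I_2$-piece of $B_1$; from the $I_2$-piece of $B_1$, $m_1$ upward steps land in $B_2$; and from $B_2$, the remaining $m_2-m_1-1$ upward steps of column 2 and one application of $R$ land back in $B_1$, since $R(I_2) = [0,\lambda_2) \subset I_1 \cup I_2$. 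So $T$ realizes the structure of a tower of type $(m_1, m_2 - m_1)$ over the first-return map $R' \colon B \to B$.

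It remains to verify that $R'$ is genuinely a 2-IET and to promote this realization to a conjugacy in $\mathbb{G}$. I would relabel the three components of $B$ by affine shifts into a single interval, placing the $I_1$-piece of $B_1$ as $[0, \lambda_1)$, the $I_2$-piece of $B_1$ as $[\lambda_1, 1)$, and $B_2$ as $[1, 1+\lambda_2)$. Under this relabeling, the four pieces of $R'$ carry a common translation $+\lambda_2$ on the portion making up $[0,1)$ and a common translation $-1$ on the portion making up $[1, 1+\lambda_2)$, so they collapse into the rotation by $\lambda_2$ on $[0, 1+\lambda_2)$ with exchanged intervals of lengths $1$ and $\lambda_2$. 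The relabeling then extends uniquely to an IET $\phi \colon [0,A) \to [0,A)$ given by translating level $k$ of column 1 by $+(k-1)\lambda_2$, level $k$ of column 2 by $-(m_1-k)\lambda_1$ for $1 \le k \le m_1$, and by the identity on levels $k > m_1$ of column 2; a direct check shows that $\phi$ is a bijection and that $\phi T \phi^{-1}$ is the laid-out tower of type $(m_1, m_2 - m_1)$ over $R'$. Rescaling $[0,A)$ to $[0,1)$ converts this into a conjugacy in $\mathbb{G}$.

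The main obstacle I anticipate is the bookkeeping at the two ``top of column'' transitions, where pieces of $R'$ with different original translations must collapse into a single rotation step under the relabeling. The key input is that $R$ sends the two subparts of $I_2 \times \{m_1+1\}$ --- namely the preimages under $R|_{I_2}$ of $I_1$ and of $[\lambda_1, \lambda_2)$ --- precisely onto the two subparts of $B_1$ that the chosen relabeling aligns correctly; the identity $\lambda_1 + \lambda_2 = 1$ then does the rest.
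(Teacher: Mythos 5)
Your overall strategy is sound and, in its main case, correct: you cut the taller column at level $m_1+1$, take the first return of $T$ to the union of the base with the cut level, check the return times are $m_1$ and $m_2-m_1$, verify that the three pieces of the return map collapse (after relabeling) into a single rotation exchanging intervals of lengths $1$ and $\lambda_2$, and then re-stack. I checked this computation and it is right. This is a genuinely different organization from the paper's proof, which never induces: it splits into cases according to whether $m_2-m_1\geq m_1$ or $m_2-m_1<m_1$ (heights, not lengths), writes down the candidate tower $S$ with base lengths $(l_1+l_2,\,l_2)$ or $(l_2,\,l_1+l_2)$ directly, and exhibits the conjugating IET $g$ level by level. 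Your route buys a conceptual explanation of where the new base rotation comes from (it is the induced map), at the cost of the bookkeeping you flag; the paper's route is shorter to state but leaves the verification of $g^{-1}Tg=S$ as a routine check. One pleasant byproduct of your computation, worth noticing, is that the collapse to a rotation ($+\lambda_2$ on the relabeled $[0,1)$, $-1$ on $[1,1+\lambda_2)$) does not use $\lambda_1<\lambda_2$ at all: landing anywhere in the base after the top of column $1$ is enough, since the whole base lies in $B_1$. So your case-$1$ argument actually proves the stronger statement that $T$ itself is always conjugate to a tower of type $(m_1,m_2-m_1)$, and the length dichotomy is a red herring (it also, as written, omits $\lambda_1=\lambda_2$).

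The one genuine gap is the second case. You reduce it to ``the analogous construction applied to $T^{-1}$,'' justified by the fact that the base of $T^{-1}$ has first length less than second length. But the operative hypothesis in your case-$1$ recipe is not the length comparison; it is that the \emph{second} column is the taller one, since $B_2$ is taken at level $m_1+1$ of column $2$. For $T^{-1}$, which you correctly identify as a tower of type $(m_2,m_1)$ over $R^{-1}$, the taller column is the \emph{first} one, so the literal case-$1$ recipe is vacuous (level $m_2+1$ of the second column does not exist); you must instead cut the first column at level $m_1+1$ and redo the relabeling in mirror image, placing the cut piece as the first interval of the new base, which is exactly what produces the type $(m_2-m_1,m_1)$ you claim. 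The conclusion is therefore correct, but the step needs to be carried out (or at least the correct symmetry identified), especially since you yourself single out the relabeling bookkeeping as the delicate point. Relatedly, the explicit displacement formulas you give for $\phi$ (e.g.\ ``$+(k-1)\lambda_2$ on level $k$ of column $1$,'' identity on the upper levels of column $2$) depend on an unstated layout convention and should not be trusted as written; the safe formulation, which is also what the paper does, is to define $\phi$ piecewise by translating $T^j(P)$ onto the $j$-th level of the new tower over the corresponding base piece $P$, for each of the three pieces $P$ of $B$ and each $j$ less than the return time.
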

\begin{proof}
Let $I_1$ and $I_2$ be the intervals which are interchanged by the underlying rotation. Let $l_1$ and $l_2$ denote the lengths of these intervals, respectively. The cases $m_2>m_1$ and $m_1>m_2$ are analogous, so we will only consider the case $m_2>m_1$. How we proceed depends on whether or not $m_2-m_1 \geq m_1$.

Suppose first that $m_2-m_1 \geq m_1$. Let $S$ be a tower of type $(m_1,m_2-m_1)$ over the rotation which exchanges two intervals, $J_1$ and $J_2$, of length $l_1+l_2$ and $l_2$, respectively. Divide $J_1$ into two intervals, say $K_1$ and $K_2$, of length $l_2$ and $l_1$, respectively. Let $g:[0,1)\rightarrow [0,1)$ be the IET which is defined as follows. For $0\leq j \leq m_1-1$, $g$ translates the interval $S^j(K_1)$ onto the interval $T^{m_2-m_1+j}(I_2)$. 
For $0\leq j \leq m_1-1$, $g$ translates the interval $S^j(K_2)$ onto the interval $T^{j}(I_1)$. For
for $0\leq j \leq m_2-m_1-1$, $g$ translates the interval $S^j(J_2)$ onto the interval $T^{j}(I_2)$.
Then it is straightforward to verify that $g^{-1}Tg =S$.

Now suppose that $m_2-m_1<m_1$. Let $S$ be a tower of type $(m_2-m_1,m_1)$ over the rotation which exchanges two intervals, $J_1$ and $J_2$, of length $l_2$ and $l_1+l_2$, respectively. Divide $J_2$ into two intervals, say $K_1$ and $K_2$, of length $l_1$ and $l_2$, respectively. Let $g:[0,1)\rightarrow [0,1)$ be the IET which is defined as follows. For $0\leq j \leq m_2-m_1-1$, $g$ translates the interval $S^j(J_1)$ onto the interval $T^{m_2-1-j}(I_2)$. For $0\leq j \leq m_1-1$, $g$ translates the interval $S^j(K_1)$ onto the interval $T^{m_1-1-j}(I_1)$. For $0\leq j \leq m_1-1$, $g$ translates the interval $S^j(K_2)$ onto the interval $T^{m_1-1-j}(I_2)$. One can verify that $g^{-1}T^{-1}g=S$. 
\end{proof} 

\begin{cor}\label{cor:tower conj 1}
Let $T$ be a tower of type $(m_1,m_2)$ over a rotation. Let $d$ be the greatest common divisor of $m_1$ and $m_2$. Then $T$ is conjugate to a tower of constant height $d$ over a rotation.
\end{cor}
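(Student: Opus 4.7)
The natural approach is to induct on $m_1+m_2$, using Lemma \ref{lem:tower induction} as a Euclidean-style reduction on the pair $(m_1,m_2)$. In the base case $m_1 = m_2$ we have $d = m_1 = m_2$ and $T$ is already a tower of constant height $d$ over a rotation, so nothing needs to be done. For the inductive step, assume $m_1 \neq m_2$, and by the symmetric version of the lemma we may suppose $m_1 < m_2$. Lemma \ref{lem:tower induction} then gives two possibilities: either $T$ is conjugate to a tower $T'$ of type $(m_1, m_2-m_1)$ over a rotation, or $T^{-1}$ is conjugate to a tower $T'$ of type $(m_2-m_1, m_1)$ over a rotation. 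In both possibilities $\gcd(m_1, m_2-m_1) = d$ and the new pair has sum $m_2 < m_1+m_2$, so the inductive hypothesis applied to $T'$ produces a tower $U$ of constant height $d$ over a rotation to which $T'$ is conjugate.

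In the first possibility this immediately gives that $T$ is conjugate to $U$, completing the induction. In the second possibility we obtain that $T^{-1}$ is conjugate to $U$, so $T$ is conjugate to $U^{-1}$, and it then suffices to verify the following auxiliary fact: the inverse of a tower of constant height $d$ over a rotation is itself conjugate to a tower of constant height $d$ over a rotation. This is straightforward from Definition \ref{defi:tower}: if $U$ is built from a rotation $R$ with constant height function $f \equiv d$, then $U^{-1}$ moves a point down its stack one level at a time and applies $R^{-1}$ when the bottom of a stack is reached. Relabeling the levels of each stack from top to bottom presents $U^{-1}$ as a tower of constant height $d$ over $R^{-1}$, and $R^{-1}$ is itself a rotation.

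The only real subtlety is the bookkeeping forced by the inversion appearing in Lemma \ref{lem:tower induction}: during the Euclidean reduction we may be compelled to alternate between $T$ and $T^{-1}$, so at the terminal stage of the induction it is essential that inversion preserves the class of constant-height towers over rotations. This is exactly the auxiliary fact just described, and because the height function is constant there is no asymmetry between the two stacks of $U$ to obstruct it. I expect this to be the only step needing care; the rest of the argument is a straightforward strong induction mirroring the Euclidean algorithm on the pair $(m_1,m_2)$.
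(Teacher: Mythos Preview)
Your proposal is correct and follows essentially the same approach as the paper: a Euclidean-style reduction using Lemma~\ref{lem:tower induction}, terminating when the two heights coincide, with the inversion handled at the end via the observation that the inverse of a constant-height tower over a rotation is again such a tower. The only difference is cosmetic---you phrase the iteration as an explicit strong induction on $m_1+m_2$, whereas the paper simply says ``by repeating this several times''---and you give a slightly more detailed justification of the auxiliary fact about $U^{-1}$.
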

\begin{proof}
If $m_1=m_2$ there is nothing to prove. If $m_2 > m_1$, then, according to Lemma \ref{lem:tower induction}, either $T$ is conjugate to a tower of type $(m_1,m_2-m_1)$ or $T^{-1}$ is conjugate to a tower of type $(m_2-m_1,m_1)$. The analogous result holds if $m_1>m_2$. By repeating this several times if necessary, we see that either $T$ or $T^{-1}$ is conjugate to a tower of constant height $d$ over a rotation. The claim follows, since the inverse of a tower of constant height $d$ over a rotation is also a tower of constant height $d$ over a rotation.
\end{proof}

\begin{cor}\label{cor:tower conj 2}
Let $T$ be a tower of type $(m_1,m_2)$ over a rotation. If $m_1$ and $m_2$ are relatively prime, then $T$ is conjugate to a rotation.
\end{cor}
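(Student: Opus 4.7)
The claim should follow almost immediately from Corollary \ref{cor:tower conj 1}. The plan is to apply that corollary with $d=\gcd(m_1,m_2)=1$ to conclude that $T$ is conjugate to a tower of constant height $1$ over some rotation $R$, and then to observe that any tower of constant height $1$ over a rotation is itself a rotation.

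For the second step I would unwind Definition \ref{defi:tower} in the case of constant height one. With $f\equiv 1$, the domain of $R_f$ consists of the single level $\{(x,1):x\in[0,1)\}$, and since $i+1=2>1=f(x)$ everywhere, the dynamics reduces to $R_f(x,1)=(R(x),1)$. The end-to-end rescaling that realizes $R_f$ as an IET on $[0,1)$ is trivial when there is only one level, so $R_f$ coincides with the underlying rotation $R$. Hence a tower of constant height $1$ over $R$ is literally equal to $R$, not merely conjugate to it.

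Combining these two observations completes the proof: $T$ is conjugate to the rotation $R$. I do not anticipate any obstacle here, since all of the combinatorial work has been absorbed into Lemma \ref{lem:tower induction} and Corollary \ref{cor:tower conj 1}; this corollary is really just the specialization to the case where the Euclidean algorithm on the two heights terminates at $1$.
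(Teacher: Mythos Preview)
Your proposal is correct and follows exactly the same route as the paper: apply Corollary~\ref{cor:tower conj 1} with $d=\gcd(m_1,m_2)=1$ and then note that a tower of constant height~$1$ over a rotation is itself a rotation. The paper's proof is a single sentence to this effect, and your extra unwinding of Definition~\ref{defi:tower} is a harmless elaboration of that observation.
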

\begin{proof}
This follows from the previous corollary, since a tower of constant height 1 over a rotation is just a rotation.
\end{proof}

\noindent We are now ready to prove Theorem \ref{thm:tower classification}.
\vspace{5pt} 

\noindent \textit{Proof of Theorem \ref{thm:tower classification}}. Let $T$ be a tower of type $(m,n)$ over a minimal rotation. If $m$ and $n$ are relatively prime, Corollary \ref{cor:tower conj 2} tells us that $T$ is conjugate to a rotation. This rotation must be minimal since $T$ is. If $m$ and $n$ are not relatively prime, and $d>1$ is their greatest common divisor, Corollary \ref{cor:tower conj 1} tells us that $T$ is conjugate to a tower of constant height $d$ over a rotation. Once again, this rotation must be minimal, since $T$ is.

We must show that the two cases in the previous paragraph are mutually exclusive. In the first case, $T$ is conjugate to a minimal rotation, so all of its powers are minimal. In the second case, $T^d$ is not minimal, since some conjugate of $T^d$ maps each level of a tower of constant height $d>1$ to itself. This shows that the two cases are not compatible.

It remains to show that $T$ has a nontrivial root in either case. It is clear that a rotation has roots of arbitrary order, so there is nothing to show in the first case. 

Suppose that $T$ is a tower of constant height $d$ over a rotation. By rescaling, we can assume that each level of the tower has length one. Suppose that the underlying rotation is given by $x\mapsto x+\alpha \hspace{2pt}(\text{mod }1)$. 

Let $\beta_1,\beta_2,\dots,\beta_{d}$ be some parameters which will be specified later. For $i=1,2,\dots,d$, let $R_i$ be the IET which acts as a rotation by $\beta_i$ on the $i^{th}$ level of the tower and leaves the other levels fixed. Let $P$ be the periodic IET which cyclically permutes the levels of the tower. We claim that if the parameters $\beta_1,\beta_2,\dots,\beta_{d}$ are chosen appropriately, then $S=PR_1R_2\cdots R_d$ satisfies $S^{1+d}=T$. 

Consider how $S$ acts on the bottom level of the tower. Applying $S$ once results in this interval being rotated by $\beta_1$ and then moved up to the second level. A second application of $S$ rotates the interval by $\beta_2$ and then moves the interval up  to the third level. After $d$ iterates, the interval has returned to the bottom level but has been rotated by $\beta_1 + \beta_2 + \cdots + \beta_d$. Applying $S$ one more time causes another rotation by $\beta_1$ and moves the interval up to the second level. So the net effect of $S^{1+d}$ is to rotate  the bottom level by $2\beta_1 + \beta_2 + \cdots + \beta_d$ and then move the bottom level up to the second level. Meanwhile, $T$ simply moves the bottom level up to the second level. Therefore the action of $S^{1+d}$ on the bottom level of the tower coincides with the action of $T$ if and only if $2\beta_1 + \beta_2 + \cdots + \beta_d\equiv 0 \hspace{2pt} (\text{mod }1)$.

We can analyze the action of $S^{1+d}$ on the other levels of the tower in a similar way. We find that $S^{1+d}=T$ if and only if
\begin{align*}
2\beta_1 + \beta_2 + \beta_3 + \cdots + \beta_d\equiv 0 \hspace{2pt} (\text{mod }1)\\
\beta_1 + 2\beta_2 + \beta_3 + \cdots + \beta_d\equiv 0 \hspace{2pt} (\text{mod }1)\\
\beta_1 + \beta_2 + 2\beta_3 + \cdots + \beta_d\equiv 0 \hspace{2pt} (\text{mod }1)\\
\vdots \hspace{80pt}\\
\beta_1 + \beta_2 + \beta_3 + \cdots + 2\beta_d\equiv \alpha \hspace{2pt} (\text{mod }1)
\end{align*}
So we can let $(\beta_1, \beta_2, \dots, \beta_d)$ be any solution to this system of equations. \qed

\section{The First Return Map}

In this section we review the basic properties of first return maps which will be used in our proofs of Theorems \ref{thm:roots idoc}, \ref{thm:one orbit}, and \ref{thm:roots rank}. At the end of the section, we give the proof of Theorem \ref{thm:one orbit}.

Given an IET $T$, let $D(T)$ denote the set of points at which $T$ is discontinuous. It is well-known that if $T$ is an IET and $[a,b)\subseteq [0,1)$, then the first return map to $[a,b)$ is an IET (up to rescaling). The following lemma is an elaborate formulation of this fact.

\begin{lem}\label{lem:first return}
Let $T$ be an IET. Suppose that $J=[a,b)$ is a subinterval of $[0,1)$ such that $(a,b)\cap D(T) = \emptyset$. Let $P\subseteq J$ consist of those points $x\in (a,b)$ for which there exists an $n\geq 1$ such that $T^j(x)\notin J \cup D(T) \cup \lbrace a,b \rbrace$ for $0<j<n$ and $T^n(x)\in D(T) \cup \lbrace a,b \rbrace.$ The set $P$ is finite. Therefore $P$ partitions $J$ into finitely many subintervals $J_1,J_2,\dots,J_k$. There exist positive integers $m_1,m_2,\dots,m_k$ such that 
\begin{enumerate}[(1)]
\item
for each $i$, the restriction of $T^j$ to $J_i$ is a translation for $1\leq j \leq m_i$;
\item
for each $i$, $T^j(J_i)\cap J = \emptyset$ for $0<j<m_i$;
\item
for each $i$, $T^{m_i}$ translates $J_i$ onto a subinterval of $J$;
\item
the intervals $T^{m_i}(J_i)$, $1\leq i \leq k$, are pairwise disjoint.
\end{enumerate} 
\end{lem}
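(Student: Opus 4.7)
The plan is to first prove finiteness of $P$ via an explicit injection into $D(T) \cup \{a,b\}$, then construct the partition and define the common return times $m_i$, and finally verify properties (1)--(4) using the piecewise-translation and right-continuity properties of IETs.

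For finiteness, observe that the integer $n$ in the definition of $P$ is uniquely determined by $x$: two candidates $n_1 < n_2$ would put $T^{n_1}(x) \in D(T) \cup \{a,b\}$, contradicting the avoidance clause for $n_2$ at step $j = n_1$. Write $n(x)$ for this unique integer and define $\phi : P \to D(T) \cup \{a,b\}$ by $\phi(x) = T^{n(x)}(x)$. To see $\phi$ is injective, suppose $\phi(x_1) = \phi(x_2) = y$ with $x_1 \neq x_2$ and, without loss of generality, $n(x_1) > n(x_2)$; applying $T^{-n(x_2)}$ gives $T^{n(x_1) - n(x_2)}(x_1) = x_2 \in (a,b) \subseteq J$, contradicting the avoidance clause for $x_1$ at $j = n(x_1) - n(x_2)$. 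Hence $|P| \leq |D(T) \cup \{a,b\}| < \infty$.

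Enumerate $\{a\} \cup P \cup \{b\} = \{p_0 = a < p_1 < \cdots < p_k = b\}$ and set $J_i := [p_{i-1}, p_i)$. For each $i$, pick $x$ in the open interval $(p_{i-1}, p_i)$; Poincar\'e recurrence guarantees some such $x$ has a finite first return to $J$, and we call this time $m_i$. Because $x \notin P$, the iterates $T(x), \ldots, T^{m_i - 1}(x)$ must avoid $D(T) \cup \{a,b\}$ as well --- otherwise $x$ itself would satisfy the defining condition of $P$ with some $n \leq m_i$. Consequently each $T^j$ for $1 \leq j \leq m_i$ is continuous at $x$ and hence a single translation on some neighborhood of $x$ in $(p_{i-1}, p_i)$. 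A standard openness argument combined with connectedness of the open interval shows that the return-time function is constantly $m_i$ throughout $(p_{i-1}, p_i)$ and that $T^{m_i}$ acts there by a single translation $x \mapsto x + c_i$, establishing (1)--(3) on the open interval. Right-continuity of iterates of IETs extends the same translation formula to the half-open interval $J_i = [p_{i-1}, p_i)$. For property (4), suppose $T^{m_i}(x_i) = T^{m_j}(x_j)$ with $x_i \in J_i$, $x_j \in J_j$, and $i \neq j$: if $m_i = m_j$ then $x_i = x_j$ by injectivity of $T$, contradicting $J_i \cap J_j = \emptyset$; otherwise, without loss of generality $m_i < m_j$, and then $T^{m_j - m_i}(x_j) = x_i \in J$, contradicting property (2) for $J_j$ at step $0 < m_j - m_i < m_j$.

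The delicate point is the behavior at the left endpoints $p_{i-1}$ for $i \geq 2$, which themselves lie in $P$ and whose orbits therefore hit $D(T) \cup \{a,b\}$ before returning to $J$. Verifying that they nonetheless exhibit the common return behavior of the open interior --- in particular that $T^{m_i}(p_{i-1})$ lands in $J$ rather than at $b$, and that no intermediate $T^j(p_{i-1})$ equals $a$ --- requires combining right-continuity with the observation that any such misbehavior at $p_{i-1}$ would propagate by right-continuity to nearby points of the open interior and contradict the common translation formula already established there.
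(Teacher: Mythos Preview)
The paper does not actually prove this lemma; it simply cites \cite[Lemma~4.2]{Viana}. Your proposal supplies a direct elementary proof, which is a genuine addition rather than a comparison point.

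Your argument is essentially correct. Two remarks. First, in the injectivity of $\phi$ you tacitly assume $n(x_1)\neq n(x_2)$; the case $n(x_1)=n(x_2)$ should be dispatched separately by bijectivity of $T^{n(x_1)}$. Second, the phrase ``a standard openness argument combined with connectedness'' is doing real work and deserves one more sentence. The cleanest route is inductive rather than point-set topological: since $(p_{i-1},p_i)$ contains no point of $P$, one shows by induction on $j$ that the translation $T^j$ carries the entire open interval $(p_{i-1},p_i)$ into an interval disjoint from $D(T)\cup\{a,b\}$, and hence either entirely inside $(a,b)$ or entirely outside $[a,b]$; the first $j$ for which the former occurs is the common return time $m_i$, and Poincar\'e recurrence guarantees this $j$ is finite. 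This avoids having to argue separately that the set of points with return time $m_i$ is closed as well as open. With that formulation, your endpoint concerns in the final paragraph largely dissolve: once $T^j$ is a single translation on the open interval with image disjoint from $\{a,b\}$, right-continuity immediately gives $T^j(p_{i-1})\notin J$ for $0<j<m_i$ and $T^{m_i}(p_{i-1})\in[a,b)$, with no special pathology possible at $a$.
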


\noindent For a proof of this result, see e.g. \cite[Lemma~4.2]{Viana}.

\begin{defi}\label{defi:first return}
Let $T$ be an IET. Let $J$ be a subinterval of $[0,1)$ satisfying the hypothesis of Lemma \ref{lem:first return}. For each $x\in J$, let $n_J(x)= \inf \lbrace n \geq 1 :T^n(x)\in J \rbrace$. According to the lemma, $n_J(x)=m_i$ for $x\in J_i$. The map $T_J:J \rightarrow J$ defined by $T_J(x)=T^{n_J(x)}(x)$ is the \textit{first return map} to $J$. The integers $m_1,m_2,\dots,m_k$ are  the \textit{return times}.
\end{defi}

In what follows, we will be concerned with counting the number of distinct orbits to which the discontinuities of an IET belong. Motivated by this, we make the following definition.

\begin{defi}\label{defi:chain}
Let $T$ be an IET. A finite sequence of points $x_1,x_2,$ $\dots,x_k$, $k\geq 1$, in $[0,1)$ will be called a \textit{$T$-chain} if $x_1$ and $x_k$ both belong to $D(T)\cup\lbrace 0 \rbrace$ and $T(x_i)=x_{i+1}$ for $i=1,2,\dots,k-1$. It will sometimes be convenient to refer to the set $\lbrace x_1,x_2,\dots,x_k \rbrace$ itself as a $T$-chain. By a maximal $T$-chain we mean a $T$-chain which is not a proper subset of another $T$-chain.  
\end{defi}

\begin{rem}\label{rem:connection}
According to the above definition, it is possible that a $T$-chain consists of a single point. This is what distinguishes a $T$-chain from a $T$-connection, a term that is used commonly in the literature. 
\end{rem}

Let $T$ be an IET. Clearly every point in $D(T)$ is contained in some $T$-chain. If $T$ is minimal, then every orbit is infinite, but the number of discontinuities is finite, so every $T$-chain is contained in a unique maximal $T$-chain. Any two maximal $T$-chains are clearly disjoint from one another.

\begin{lem}\label{lem:optimal subinterval}
Suppose that $T$ is a minimal IET and that the discontinuities of $T$ belong to precisely $q$ different orbits. Then there is a subinterval $J\subseteq [0,1)$ satisfying the hypothesis of Lemma \ref{lem:first return} for which the first return map $T_J$ is a $(q+1)$-IET.
\end{lem}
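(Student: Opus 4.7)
The strategy is to choose the endpoints $a$ and $b$ of $J=[a,b)$ as consecutive points of a carefully chosen finite set of orbit iterates, and then count $|P|$ orbit-by-orbit.

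\textbf{Step 1 (orbit-by-orbit count).} For any $J=[a,b)$ with $(a,b)\cap D(T)=\emptyset$, the definition of $P$ in Lemma~\ref{lem:first return} gives an injection $\Phi:P\to W:=D(T)\cup\{a,b\}$ via $\Phi(x)=T^{n}(x)$, whose image consists of those $y\in W$ whose nearest backward $T$-iterate lying in $J\cup W$ actually falls in $(a,b)$. Label each orbit point as $\mathsf{A}$ if it lies in $(a,b)$ and $\mathsf{B}$ if it lies in $W$, and list the marks in orbit order. Then $|P|$ equals the total count of consecutive $\mathsf{A}\mathsf{B}$ pairs summed across all $T$-orbits. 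By minimality every orbit visits $(a,b)$ infinitely often in both directions, so each orbit containing a $\mathsf{B}$-point contributes at least $1$, and contributes exactly $1$ iff its $\mathsf{B}$-points are consecutive in orbit order with no intermediate $\mathsf{A}$-mark (a single ``cluster'').

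\textbf{Step 2 (construction of $J$).} Let $\mathcal{O}_1,\ldots,\mathcal{O}_q$ be the $T$-orbits of discontinuities, and in each enumerate the discontinuities in orbit order as $w^{(i)}_1,\ldots,w^{(i)}_{s_i}$; write $w^{(i)}_{s_i}=T^{N_i}(w^{(i)}_1)$. Define the finite set
\[
\mathcal{F}=\bigcup_{i=1}^{q}\bigl\{T^{j}(w^{(i)}_1):-1\le j\le N_i+1\bigr\}.
\]
Since $T$ is aperiodic (minimality), $\mathcal{F}$ has at least three elements; note $D(T)\subset\mathcal{F}\subset\bigcup_i\mathcal{O}_i$. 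Order $\mathcal{F}$ in $[0,1)$ as $f_1<f_2<\cdots<f_M$, and choose $a:=f_k$, $b:=f_{k+1}$ for some $k$. Then $(a,b)$ is a connected component of $[0,1)\setminus\mathcal{F}$, so $(a,b)\cap D(T)=\emptyset$, and both $a$ and $b$ lie in the union of orbits of discontinuities.

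\textbf{Step 3 (verification).} Any orbit containing a point of $W$ must meet $\mathcal{F}\subset\bigcup_i\mathcal{O}_i$, so only the $q$ orbits $\mathcal{O}_i$ contribute. Fix one, $\mathcal{O}_i$. Its $\mathsf{B}$-points all lie in $\mathcal{O}_i\cap\mathcal{F}=\{T^{j}(w^{(i)}_1):-1\le j\le N_i+1\}$. Listing them in orbit order as $T^{j_1}(w^{(i)}_1),\ldots,T^{j_t}(w^{(i)}_1)$ with $-1\le j_1<\cdots<j_t\le N_i+1$, the intermediate orbit iterates $T^{l}(w^{(i)}_1)$ with $j_r<l<j_{r+1}$ satisfy $0\le l\le N_i$, hence lie in $\mathcal{F}$ and therefore outside $(a,b)$. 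Between consecutive $\mathsf{B}$-marks of $\mathcal{O}_i$ there is no $\mathsf{A}$-mark, so the cluster condition holds and $\mathcal{O}_i$ contributes exactly $1$. Summing gives $|P|=q$, and by Lemma~\ref{lem:first return} the first return map $T_J$ is a $(q+1)$-IET.

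\textbf{Main obstacle.} The main work is ensuring that every $\mathsf{B}$-point of every orbit $\mathcal{O}_i$ lies within a predictable orbit segment and that the intermediate iterates between consecutive $\mathsf{B}$-points fall inside $\mathcal{F}$. Defining $\mathcal{F}$ as a one-step forward and backward extension of each segment accomplishes this cleanly: it is large enough to contain every $\mathsf{B}$-point and the iterates joining them, yet fixed before $a$ and $b$ are chosen. A minor point is verifying that $\mathcal{F}$ has at least two elements so the chosen $a,b$ are distinct, which is automatic once $T$ has any discontinuity.
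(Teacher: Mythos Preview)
Your proof is correct and follows essentially the same strategy as the paper's: choose $J$ as a gap between consecutive points of a finite set of orbit iterates containing $D(T)$, so that both endpoints $a,b$ already lie on discontinuity orbits, and then show $|P|=q$.

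The only differences are in packaging. The paper's finite set is the union $C$ of the maximal $T$-chains (Definition~\ref{defi:chain}, which brings $0$ along automatically since $T^{-1}(0)\in D(T)$); it then counts $|P|$ by exhibiting a direct bijection $x\mapsto T^{n_x}(x)$ from $P$ onto the set of \emph{initial} points $y_1,\dots,y_q$ of those chains. Your $\mathsf{A}/\mathsf{B}$ cluster count is an equivalent reformulation of that bijection: the ``single cluster per orbit'' condition says precisely that each orbit contributes exactly one $\mathsf{B}$-point whose immediate marked predecessor is an $\mathsf{A}$, namely the earliest $\mathsf{B}$-point, which is the paper's $y_i$. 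Your one-step extension of each orbit segment is harmless but unnecessary; the argument runs verbatim with $\mathcal{F}=\bigcup_i\{T^{j}(w^{(i)}_1):0\le j\le N_i\}$, since then $a,b\in\mathcal{F}$ already forces every $\mathsf{B}$-index into $[0,N_i]$ and the intermediate indices stay there as well.
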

\begin{proof}
To say that the discontinuities of $T$ belong to precisely $q$ different orbits is equivalent to saying that there are precisely $q$ distinct maximal $T$-chains. Denote these by $C_1,C_2,\dots,C_q$. Let $C=\bigcup_{i=1}^q C_i$. Let $x\in C$. If $x$ is greater than all other points in $C$, let $y_x=1$. Otherwise, let $y_x$ be the smallest point in $C$ which is greater than $x$. The set $C$ partitions the interval $[0,1)$ into $\vert C \vert$ subintervals, each of which is of the form $I_x=[x,y_x)$, where $x\in C$. Let $J=[a,b)$ be any one of these subintervals. Since $D(T) \subseteq C$, $T$ is continuous at all points in the interior of $J$. Hence $J$ satisifies the hypothesis of Lemma \ref{lem:first return}.

In order to prove that $T_J$ is a $(q+1)$-IET, we need to show that the set $P$ described in Lemma \ref{lem:first return} contains exactly $q$ points.

Let $x\in P$. Then there exists an $n_x\geq 1$ such that $T^j(x)\notin J \cup D(T) \cup \lbrace a,b \rbrace$ for $0<j<n_x$ and $T^{n_x}(x)\in D(T) \cup \lbrace a,b \rbrace \subseteq C$. Since the $T$-chains $C_1,C_2,\dots,C_q$ are all maximal and $x\notin C$, it is clear that $x$ cannot belong to the forward orbit of any point in $C$. Therefore $T^{n_x}(x)$ must be the first point in one of the maximal $T$-chains. Let $y_1,y_2,\dots,y_q$ be the first points in $C_1,C_2,\dots,C_q$, respectively. To prove our claim, we will show that the map $P \rightarrow \lbrace y_1,y_2,\dots,y_q \rbrace$ given by $x\mapsto T^{n_x}(x)$ is a bijection. 

To prove that the map is surjective, it suffices to note that since $T$ is minimal, the backward orbit of each $y_i$ must intersect the interior of $J$. To prove that the map is injective, suppose that $T^{n_x}(x)=T^{n_y}(y)$. Then $x$ and $y$ belong to the same orbit. The defining properties of $n_x$ imply that $n_x \leq n_y$. By symmetry, $n_y \leq n_x$ and consequently $n_x=n_y$. Therefore $x=y$ since $T^{n_x}$ is a bijection.     
\end{proof}
 
We now describe the relationship between first return maps and towers. For convenience, we relax the notion of a tower by allowing the base of the tower to be any finite interval, not necessarily $[0,1)$.
 
\begin{lem}\label{lem:towers and first return}
Let $T$ be a minimal IET. Let $J$ be a subinterval of $[0,1)$ satisfying the hypothesis of Lemma \ref{lem:first return}. Suppose that the first return map $T_J$ is a $k$-IET with return times $m_1,m_2,\dots,m_k$. Then $T$ is conjugate to a tower of type $(m_1,m_2,\dots,m_k)$ over $T_J$.
\end{lem}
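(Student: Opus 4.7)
The plan is to exhibit an explicit conjugating IET $g \in \mathbb{G}$ by identifying each level of the abstract tower with its natural realization as a translate inside $[0,1)$. Concretely, I would define $\phi(x,\ell) = T^{\ell-1}(x)$ for $x \in J_i$ and $1 \leq \ell \leq m_i$, and then show that $\phi$ is a bijection from the tower domain onto $[0,1)$ (outside a finite set of boundary points), that $\phi$ intertwines the tower map $T_f$ with $T$, and that composing $\phi$ with the end-to-end identification of Definition \ref{defi:tower} produces the required conjugating IET.

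For injectivity of $\phi$, the first-return properties in Lemma \ref{lem:first return} do all the work. If $T^{\ell-1}(x) = T^{\ell'-1}(y)$ with $x \in \mathrm{int}(J_i)$, $y \in \mathrm{int}(J_{i'})$, and $\ell \leq \ell'$, then $T^{\ell'-\ell}(y) = x \in J$, but property (2) of Lemma \ref{lem:first return} forbids $T^j(y) \in J$ for $0 < j < m_{i'}$, forcing $\ell = \ell'$ and hence $x = y$. For surjectivity I would use minimality of $T$: given any $z \in [0,1)$, minimality guarantees that the backward orbit of $z$ enters $\mathrm{int}(J)$, so let $n_0$ be the smallest nonnegative integer with $x := T^{-n_0}(z) \in \mathrm{int}(J_i)$ for some $i$. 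By minimality of $n_0$ we have $T^j(x) \notin J$ for $0 < j \leq n_0$, and property (2) then forces $n_0 < m_i$, so $z = T^{n_0}(x) = \phi(x, n_0+1)$. As a byproduct, $\sum_i m_i |J_i| = 1$, so laying the columns end to end already produces an interval of length one and no rescaling is needed.

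The intertwining relation $\phi \circ T_f = T \circ \phi$ is then a short two-case computation: for $\ell < m_i$ both sides send $(x,\ell)$ to $T^\ell(x)$, and for $\ell = m_i$ we have $\phi(T_f(x,m_i)) = \phi(T_J(x), 1) = T_J(x) = T^{m_i}(x) = T(\phi(x, m_i))$. The IET model of the tower (per Definition \ref{defi:tower}) is obtained by rearranging the columns $T^{\ell-1}(J_i)$ end to end inside $[0,1)$ via a piecewise translation $\psi$; then $g := \phi \circ \psi^{-1}$ is an IET satisfying $g^{-1} T g = \psi \circ T_f \circ \psi^{-1}$, exhibiting $T$ as conjugate to the tower. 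I expect the surjectivity step to be the main point requiring care, since it is precisely where the minimality of $T$ enters in an essential way — without minimality, an orbit could fail to visit $J$ and the tower columns would not exhaust $[0,1)$. Everything else amounts to bookkeeping on top of Lemma \ref{lem:first return}.
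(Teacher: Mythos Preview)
Your proposal is correct and follows essentially the same approach as the paper: build the conjugating IET by matching each column $T^{j}(J_i)$ of the Rokhlin tower with the corresponding level of the abstract tower, using Lemma~\ref{lem:first return} for disjointness and minimality for exhaustion. The only cosmetic difference is that the paper argues surjectivity by noting that $A=\bigcup_{i,j}T^{j}(J_i)$ is $T$-invariant and then invoking minimality, whereas you trace the backward orbit of an arbitrary point into $J$; these are equivalent, and your version has the minor advantage of making the bound $n_0<m_i$ explicit.
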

\begin{proof}
Let $J_1,J_2,\dots,J_k$ be the subintervals exchanged by $T_J$. Though not explicitly stated in Lemma \ref{lem:first return}, it is not hard to see that the intervals $T^j(J_i)$, for $1\leq i \leq k$ and $0\leq j \leq m_i-1$, are pairwise disjoint. Moreover, the set $A= \bigcup_{i=1}^k \bigcup_{j=0}^{m_i-1} T^j(J_i)$ is clearly $T$-invariant (that is, $T(A)=A$). Therefore $A=[0,1)$ since $T$ is minimal. 

Let $S$ be a tower of type $(m_1,m_2,\dots,m_k)$ over $T_J$. Both $T$ and $S$ act on $[0,1)$ in essentially the same way. For $1\leq i \leq k$, the interval $J_i$ is translated by $T$ onto each of the intervals $T(J_i),T^2(J_i),\dots, T^{m_i-1}(J_i)$ before returning to $J$ according to the map $T_J$. Likewise, for $1\leq i \leq k$, the interval $J_i$ is translated by $S$ onto each of the intervals $S(J_i),S^2(J_i),\dots, S^{m_i-1}(J_i)$ before returning to $J$ according to the map $T_J$. Therefore if $g:[0,1) \rightarrow [0,1)$ is the IET which translates each of the intervals $T^j(J_i)$, for $1\leq i \leq k$ and $0\leq j \leq m_i-1$, onto the corresponding interval $S^j(J_i)$, then it is clear that $T=g^{-1}Sg$.
\end{proof}

\noindent Theorem \ref{thm:one orbit} follows easily from the preceding lemmas.
\vspace{5pt}

\noindent \textit{Proof of Theorem \ref{thm:one orbit}}. Let $T$ be a minimal IET and suppose that the discontinuities of $T$ all belong to a single orbit. By Lemma \ref{lem:optimal subinterval}, there is some interval $J\subseteq [0,1)$ such that $T_J$ is a 2-IET. Since $T$ is minimal, $T_J$ must be a minimal rotation. Lemma \ref{lem:towers and first return} tells us that $T$ is conjugate to a tower over $T_J$. \qed  

\section{Proof of Theorems \ref{thm:roots idoc} and \ref{thm:roots rank}}

In this section we prove Theorems \ref{thm:roots idoc} and \ref{thm:roots rank}. We begin with an important lemma. Recall that $D(T)$ denotes the set of points at which $T$ is discontinuous.

\begin{lem}\label{lem:orbit counting}
Let $T$ be a minimal IET with precisely $m-1$ discontinuities. If there exists an IET $S$ and an integer $n\geq 2$ such that $T=S^n$, then there are at most $\lfloor m/2 \rfloor$ maximal $S$-chains.
\end{lem}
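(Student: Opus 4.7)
The plan is to prove that every $S$-orbit containing a point of $D(S)\cup\{0\}$ in fact contains at least two points of $D(T)$; this gives $m-1 = \#D(T) \geq 2q$, and hence $q\leq (m-1)/2 \leq \lfloor m/2\rfloor$.

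First I handle the $S$-orbit of $0$. The key observation is that $S^{-1}(0)\in D(S)$ for any minimal IET $S$. Writing $S=T_{(\pi,\lambda)}$ in the notation of Definition \ref{defi:iet}, the unique index $j_0$ with $\pi(j_0)=1$ satisfies $S(\beta_{j_0-1}) = 0$, so $S^{-1}(0)=\beta_{j_0-1}$. Minimality of $S$ (inherited from $T=S^n$) rules out $\pi(1)=1$, since otherwise $I_1$ would be $S$-invariant. Hence $j_0\geq 2$ and $\beta_{j_0-1}$ is an interior partition point. By the continuity criterion recalled in the discussion preceding Definition \ref{defi:separating}, $S$ is discontinuous at $\beta_{j_0-1}$ unless $\pi(j_0)=\pi(j_0-1)+1$; but $\pi(j_0)=1$ would then force $\pi(j_0-1)=0$, which is impossible. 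Thus $\beta_{j_0-1}\in D(S)$, and the $S$-orbit of $0$ automatically contains a point of $D(S)$. Consequently $q$ equals the number of $S$-orbits of $D(S)$.

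The main step is to show that every such orbit $\mathcal O$ contributes at least two distinct points to $D(T)$. Fix a reference $x_0\in\mathcal O$ and let $P_1<P_2<\cdots<P_s$ be the integers with $S^{P_j}(x_0)\in D(S)$, with $\Delta_j\neq 0$ the jump of $S$ at $S^{P_j}(x_0)$. Because translations compose additively for IETs, the jump of $T=S^n$ at $S^p(x_0)$ equals
\[
\sum_{j\,:\,P_j\in[p,\,p+n-1]}\Delta_j.
\]
At the extreme position $p=P_1-n+1$, the window $[P_1-n+1,P_1]$ contains only $P_1$ (every $P_j$ satisfies $P_j\geq P_1$), so the jump reduces to $\Delta_1\neq 0$ and $S^{P_1-n+1}(x_0)\in D(T)$. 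Symmetrically, at $p=P_s$ the window $[P_s,P_s+n-1]$ contains only $P_s$, giving a second point $S^{P_s}(x_0)\in D(T)\cap\mathcal O$. Since $n\geq 2$, we have $P_1-n+1<P_1\leq P_s$, so the two points are distinct.

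Summing this contribution over the $q$ orbits yields $m-1\geq 2q$ and hence $q\leq\lfloor(m-1)/2\rfloor\leq\lfloor m/2\rfloor$. The main technical obstacle is controlling cancellations in the jump formula; the key trick is that the extremal positions $P_1-n+1$ and $P_s$ reduce the sum to a single nonzero term, preventing any cancellation there. The orbit of $0$ is dealt with a priori by the observation $S^{-1}(0)\in D(S)$.
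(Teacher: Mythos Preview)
Your jump formula $\sum_{j:\,P_j\in[p,p+n-1]}\Delta_j$ for the jump of $T=S^n$ at $S^p(x_0)$ is not correct as stated: it ignores the boundary effect at $0$. Concretely, take $S$ to be an irrational rotation $x\mapsto x+\alpha\pmod 1$ and $n=2$. Then $D(S)=\{1-\alpha\}$, the single maximal $S$-chain is $\{1-\alpha,0\}$, so $q=1$, while $T=S^2$ has exactly one discontinuity ($m-1=1$). Your intermediate inequality $m-1\ge 2q$ reads $1\ge 2$, which is false. Tracing your argument with $x_0=0$, one has $s=1$ and $P_1=P_s=-1$; the point $S^{P_1-n+1}(x_0)=S^{-2}(0)=1-2\alpha$ is indeed in $D(T)$, but $S^{P_s}(x_0)=1-\alpha$ is \emph{not}: a direct computation gives $\lim_{z\to(1-\alpha)^-}S^2(z)=\alpha=S^2(1-\alpha)$, so $T$ is continuous there. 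What your sum misses is the contribution arising when the left-limit trajectory reaches the right endpoint $1$ of $[0,1)$, which happens precisely when the forward window passes through $0$.

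Your argument is correct for every orbit that does not contain $0$, and the point $S^{P_1-n+1}(x_0)$ lies in $D(T)$ in every case (for the orbit of $0$ as well, since $S^{-1}(0)\in D(S)$ and the minimality of $P_1$ prevent $0$ from appearing in the window $[P_1-n+1,P_1]$). Hence the orbit of $0$ contributes at least one point to $D(T)$ and each of the remaining $q-1$ orbits contributes at least two, giving $m-1\ge 2q-1$ and therefore $q\le m/2$, i.e.\ $q\le\lfloor m/2\rfloor$. This is exactly how the paper organizes the count: for each maximal $S$-chain $\{x_1,\dots,x_k\}$ it exhibits the two points $S^{-(n-1)}(x_1)$ and $x_k$ in $D(T)$, except that for the unique chain with $x_k=0$ only the first of these is guaranteed.
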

\begin{proof}
Suppose that there is such an $S$. It is clear that $S$ must be minimal. It should be noted that the definition of an IET, together with the fact that $S(0)\neq 0$, implies that $S^{-1}(0)\in D(S)$. Therefore $x_1 \neq 0$ if $\lbrace x_1,x_2,\dots,x_k \rbrace$ is a maximal $S$-chain.

Let $C=\lbrace x_1,x_2,\dots,x_k \rbrace$ be a maximal $S$-chain. Let $p=S^{-(n-1)}(x_1)$. Since $C$ is maximal, none of the points $p,S(p),S^2(p),\dots,S^{n-2}(p)$ can belong to $D(S)\cup\lbrace 0 \rbrace$. Therefore $S^{n-1}$ is continuous at $p$. Since $p\neq 0$, it follows that the restriction of $S^{n-1}$ to some open interval containing $p$ must be a translation. Combining this with the fact that $S$ is discontinuous at $x_1=S^{n-1}(p)$, we see that $S^n=T$ is discontinuous at $p$. Similar reasoning shows that if $x_k \neq 0$, then $S^n=T$ must be discontinuous at $x_k$. 

Suppose that $C_1,C_2,\dots,C_q$ are the distinct maximal $S$-chains. The argument in the preceding paragraph shows that if $0 \not\in C_i=\lbrace x_1,x_2,\dots,x_k \rbrace$ for some $i$, then $C_i$ contributes at least two points to the set $D(T)$, namely $S^{-(n-1)}(x_1)$ and $x_k$. These points must be distinct since $n\geq 2$. If $0 \in C_i=\lbrace x_1,x_2,\dots,x_k \rbrace$, then $C_i$ contributes at least one point to $D(T)$, namely $S^{-(n-1)}(x_1)$. Since there are $q$ maximal $S$-chains, and $0$ belongs to precisely one of them, it follows that $\vert D(T) \vert \geq 2q-1$. On the other hand, $\vert D(T) \vert = m-1$ by assumption. Therefore $q\leq \lfloor m/2 \rfloor$, as claimed.
\end{proof}

\noindent \textit{Proof of Theorem \ref{thm:roots rank}}. Let $T$ be a minimal $m$-IET defined by a separating permutation. Suppose that $S$ is another IET and that there exists a natural number $n\geq 2$ such that $T=S^n$. We have to prove that rank$(T)\leq 1 + \lfloor m/2 \rfloor$. 

It is clear that $D(S^n)\subseteq\bigcup_{i=0}^{n-1}S^{-i}(D(S))$. Using this, it is not hard to verify that rank$(T)\leq$ rank$(S)$. It is also clear that $S$ must be minimal. 

Let $q$ denote the number of distinct $S$-orbits to which the points of $D(S)$ belong. Since $T$ is defined by a separating permutation, $T$ has precisely $m-1$ discontinuities. Lemma \ref{lem:orbit counting} implies that there are at most $\lfloor m/2 \rfloor$ maximal $S$-chains, so $q\leq \lfloor m/2 \rfloor.$

Lemma \ref{lem:optimal subinterval} tells us that there exists some interval $J\subseteq [0,1)$ such that the first return map $S_J$ is a $(q+1)$-IET. Let $J_1,J_2,\dots,J_q,J_{1+q}$ be the subintervals exchanged by $S_J$. Let $m_1,m_2,\dots,m_{1+q}$ be the return times. The intervals $S^j(J_i)$, for $1\leq i \leq 1+q$ and $0\leq j \leq m_i-1$, are pairwise disjoint. Moreover, the set $A= \bigcup_{i=1}^{1+q} \bigcup_{j=0}^{m_i-1} S^j(J_i)$ is $S$-invariant, so $A=[0,1)$ since $S$ is minimal.

We claim that each of the intervals $S^j(J_i)$, for $1\leq i \leq 1+q$ and $0\leq j \leq m_i-1$, must be contained in one of the intervals on which $S$ is continuous. For if not, then some discontinuity of $S$ belongs to the forward $S$-orbit of a point in the interior of one of the intervals $J_i$. This point would then have to belong to the set $P$ described in Lemma \ref{lem:first return}, a contradiction. 

The discussion in the two preceding paragraphs shows that each of the intervals which are exchanged by $S$ is a disjoint union of some of the intervals $S^j(J_i)$, for $1\leq i \leq 1+q$ and $0\leq j \leq m_i-1$. These intervals have at most $1+q$ different lengths, so it follows that $\text{rank}(S)\leq 1+q \leq 1 + \lfloor m/2 \rfloor$. Since $\text{rank}(T)\leq \text{rank}(S)$, this completes the proof. \qed
\vspace{8pt}

\noindent \textit{Proof of Theorem \ref{thm:roots idoc}}. Let $T$ be a minimal 3-IET which is not of rotation type. We have to prove that $T$ has a nontrivial root in $\mathbb{G}$ if and only if $T$ fails to satisfy the IDOC. 

As we mentioned in the introduction, it follows from Theorems \ref{thm:one orbit} and \ref{thm:tower classification} that if $T$ fails to satisfy the IDOC, then $T$ has a root in $\mathbb{G}$. For completeness, we repeat the argument here. Since $T$ does not satisfy the IDOC, both of its discontinuities must belong to the same $T$-orbit. By Theorem \ref{thm:one orbit}, $T$ is conjugate to a tower over a minimal rotation. By Theorem \ref{thm:tower classification}, $T$ has a nontrivial root in $\mathbb{G}$.

Now suppose that $T$ has a nontrivial root in $\mathbb{G}$, say $T=S^n$, where $n\geq 2$. It is clear that $S$ must be minimal. Since $T$ has precisely two discontinuities, Lemma \ref{lem:orbit counting} implies that there is only one maximal $S$-chain. So all of the discontinuities of $S$ belong to a single $S$-orbit. Applying Theorems \ref{thm:one orbit} and \ref{thm:tower classification}, we see that $S$ must be conjugate to either a rotation or a tower of constant height $d>1$ over a rotation. 

In the first case, $T$ is also conjugate to a rotation. Since all rotations commute with one another, the centralizer of $T$ in $\mathbb{G}$ is uncountable. However, Novak has proven that any IET which is minimal and exhibits ``linear discontinuity growth" has a countable centralizer \cite[Proposition~5.3]{Novak1}. The proof of Proposition 2.3 of \cite{Novak1} makes it clear that any IET which satisfies the IDOC and which is not of rotation type will exhibit linear discontinuity growth, and will thus have a countable centralizer. Therefore, since $T$ has an uncountable centralizer, $T$ must not satisfy the IDOC. 

In the second case, observe that $S^d$ is not minimal, since some conjugate of $S^d$ maps each level of a tower of constant height $d>1$ to itself. It follows that $T^d=S^{nd}$ is not minimal. Therefore $T^d$ does not satisfy the IDOC. This implies that $T$ does not satisfy the IDOC either. \qed

\subsection*{Acknowledgments}

I would like to thank Michael Boshernitzan for encouraging me to investigate this topic, for reading many drafts of this paper, and for making a number of helpful suggestions, including a simplification of my original proof of Theorem \ref{thm:roots rank}. I would also like to thank the referee for carefully reading the paper and making several helpful comments.

\end{document}